\documentclass[final]{siamltex}
\usepackage{amsmath}
\usepackage{mathtools}
\usepackage{amssymb}
\usepackage{amsfonts}
\usepackage{amsxtra}
\usepackage{amstext}
\usepackage{amsbsy}
\usepackage{amscd}
\usepackage{graphicx}
\usepackage{float}
\usepackage{cite}
\usepackage{lmodern}
\usepackage{xcolor}
\usepackage{srcltx} 
\usepackage{marginnote,slashbox}
\usepackage{rotating}
\definecolor{darkblue}{rgb}{0.0,0.0,0.6}
\definecolor{darkgreen}{rgb}{0.0,0.6,0.0}
\usepackage[pdftex,colorlinks=true,urlcolor=darkblue,citecolor=darkblue,linkcolor=darkblue]{hyperref}
\usepackage[utf8]{inputenc}      
\usepackage{booktabs}            
\usepackage{url}                 
\usepackage[T1]{fontenc}         
\usepackage{algorithmic}         

\usepackage{calc}
\usepackage[notcite,notref]{showkeys}

\numberwithin{table}{section}    
\numberwithin{figure}{section}   
\numberwithin{equation}{section} 

\setlength{\parindent}{0cm}
\setlength{\parskip}{1ex plus 0.5ex minus 0.5ex}
\setlength{\multlinegap}{0cm}

\usepackage{my_latex_commands}

\newtheorem{assumption}[theorem]{Assumption}
\newtheorem{remark}[theorem]{Remark}

\renewcommand{\bl}[1]{\textcolor{black}{#1}}

\newcommand{\dl}{\delta \ell}
\newcommand{\e}{\varepsilon}

\renewcommand{\ll}{\bar \ell}

\newcommand{\lo}{\R^n}
\newcommand{\hon}{\R^n}
\newcommand{\hoon}{\R^n}

\newcommand{\dense}{\overset{d}{\embed}}

\newcommand{\hhyy}{H^1_0(0,T;\R^n)}

\newcommand{\llun}{L^2(0,T;\R^n)}

\newcommand{\hhyn}{H^1_0(0,T;\R^n)}
\newcommand{\F}{(\kappa \circ \HH)}
\renewcommand{\e}{\epsilon}
\begin{document}

\title{Optimality conditions for the control of a rate independent system with history variable}
\date{\today}
\author{Livia\ Betz\footnotemark[1]}
\renewcommand{\thefootnote}{\fnsymbol{footnote}}
\footnotetext[1]{Faculty of Mathematics, University of W\"urzburg,  Germany.}
\renewcommand{\thefootnote}{\arabic{footnote}}

\maketitle

\begin{abstract}
This paper addresses an optimal control problem governed by a rate independent  evolution involving an integral operator. Its particular feature is that the dissipation potential depends on the history of the state. Because of the non-smooth nature of the system, the application of standard adjoint calculus is excluded. We derive optimality conditions in qualified form by approximating the original problem by viscous  models. Though these problems preserve the non-smoothness, optimality conditions equivalent to the first-order necessary optimality conditions can be provided in the viscous case. Letting the viscous parameter vanish then yields an optimality system for the original control problem. If the optimal state at the end of the process  is not smaller than the desired state, the limit optimality conditions are complete.\end{abstract}

\begin{keywords}
history dependence, subdifferential inclusion, rate independence, optimal control, non-smooth optimization,    strong stationarity, viscous approximation, fatigue damage evolution \end{keywords}

\begin{AMS}
34G25,  49J40, 49K21, 74P99, 74R99
\end{AMS}
\section{Introduction}\label{sec:i} 
This paper is concerned with the derivation of optimality conditions for the control of the following history-dependent rate independent model:
\begin{equation}\label{eq:n}
-\partial_q \EE(t,q(t)) \in \partial_{\dot q} \RR (\HH(q)(t),\dot{q}(t)) \quad \ae (0,T),\quad q(0)=0.
  \end{equation}In \eqref{eq:n}, the symbol $\partial_{\dot q}$ denotes the convex subdifferential  with respect to the second variable.
  Thus, the above \textit{non-smooth differential inclusion}  is to be understood as:
  $$\dual{-\partial_q \EE(t,q(t))}{\eta-\dot{q}(t)}_{\lo} \leq \RR (\HH(q)(t),\eta)- \RR (\HH(q)(t),\dot{q}(t)) \quad \forall\,\eta \in \lo, \quad \ae (0,T).$$

   The stored energy $\EE:[0,T] \times \R^n \rightarrow \mathbb{R}$ is given by
\begin{equation}\label{def:e}\begin{aligned}
\EE(t, q)&:=\frac{\alpha}{2}\| q\|_{\R^n}^2-\dual{\ell(t)}{q}_{\R^n},
\end{aligned}\end{equation}where $\alpha>0$ is a fixed parameter.   
The time-dependent load $\ell:[0,T] \to \R^n$ appearing in \eqref{def:e}  will act later on as a control (see \eqref{eq:min} below). This  induces a certain state, that is  expressed in terms of the variable $q : [0,T] \to \R^n$. The  dissipation  functional  $\RR:\R^n \times  \R^n \rightarrow [0,\infty]$ is defined as \begin{equation}\label{def:r}
\RR(\zeta,\eta):=\left\{ \begin{aligned} \dual{\kappa(\zeta) }{  \eta}_{\R^n}   &\quad \text{if }\eta \in \CC,
\\\infty  &\quad \text{otherwise,}\end{aligned} \right.
\end{equation}where \begin{equation}\label{def:c}
\CC:=\{\eta \in \hon:\eta_i \geq 0, \ i=1,...,n \}
\end{equation}and $\kappa$ is a differentiable non-linearity, cf.\,Assumption \ref{assu}.\ref{it:2} for more details.
The  positive homogeneity w.r.t.\,the second argument of $\RR,$ (i.e., $\RR(\zeta,\gamma\eta)=\gamma \RR(\zeta,\eta)$ for all $\gamma \geq 0$ and for all $(\zeta,\eta)\in \R^n \times  \R^n$) shows that our model is rate independent. This means that  the solutions to \eqref{eq:n} are not affected by time-rescaling.


The essential aspect concerning the evolution in \eqref{eq:n} is the presence of the \textit{history} of the state. 
Indeed, the phenomenon of history dependence in connection with evolution inclusions has gained much interest in the past decades, see  e.g.\,\cite{mon} and the references therein. We also refer to the works \cite{oc_h_d, oc_sof} where existence of optimal control for such problems has been examined.
However, when it comes to the mathematical investigation of \textit{rate independent}  evolutions with a \textit{history} component (in terms of an integral operator), the literature is scant. The only works known to the author addressing this topic with regard to a rigorous analysis that involves existence and uniqueness of solutions are \cite{alessi} and \cite{ris}.

The structure of the equation \eqref{eq:n} is inspired by damage models with fatigue (cf.\, \cite{alessi, alessi1} and the references therein).   Therein, $\HH$ is an integral operator, also known as \textit{history operator} (Assumption \ref{assu}.\ref{it:1}). In the context of a damage evolution, $\HH$ models how the  damage experienced by the material affects its fatigue level. The degradation mapping $\kappa:\R \to [0,\infty)$ appearing in \eqref{def:r} indicates in which measure the fatigue affects the toughness  of the material. The latter is usually described by a fixed  (nonnegative) constant \cite{fn96, FK06}, while in the present model it changes  in time, depending on $\HH(q)$. To be more precise, the value of the  toughness of the body at time point $t \in [0,T]$ is given by  $\kappa(\HH(q))(t)$, cf.\ \eqref{eq:n} and \eqref{def:r}. Hence, damage  models with a history variable take into account the following crucial aspect: the occurrence of damage is favoured in regions where fatigue accumulates.

To the subdifferential inclusion in \eqref{eq:n} we associate  the following 
 optimal control problem:
\begin{equation}\label{eq:min}\tag{P}
 \left.
 \begin{aligned}
  \min \quad & j(q)+\frac{1}{2}\|q(T)-q_d\|^2_{\R^n}+\frac{1}{2}\|\ell\|^2_{H^1(0,T;\lo)}\\
  \text{s.t.} \quad & (\ell,q) \in {H_0^1(0,T;\lo)}\times {H_0^1(0,T;\hon)},
  \\\quad & q \text{ solves }
\eqref{eq:n} \text{ w.r.h.s.\,}\ell,
 \end{aligned}
 \quad \right\}
\end{equation}where $j: L^2(0,T;\R^n) \to \R$ is a smooth functional (Assumption \ref{assu:j}) and $q_d\in \R^n,\ q_d^i \geq 0, i=1,...,n, $ is the desired value of the state at the end of the process.

The main novelty in this paper arises from the fact that we aim at deriving optimality conditions for the control of a rate independent problem with \textit{state-dependent dissipation potential}. The particularity of such state equations  is that the dissipation functional  does  not  depend only on the rate, but also on the state itself \cite{mie_ros, BS} or it may depend on the time variable  in terms of a moving set \cite{kurzweil}. 
While  numerous rate independent models have been 
  addressed in the literature (see e.g.\,the references in \cite{mielke}), 
   only a few deal with a dissipation potential that has \textit{two} arguments \cite{mie_ros, BS, kurzweil, ris}. The additional dependence on the state gives in particular  rise to   difficulties when it comes to  the uniqueness of solutions. 
   
    The existence of optima for control problems governed by rate independent systems with \textit{state independent dissipation potential} has been investigated by various authors. We refer here only to \cite{k_st, kms} which focus on the (more difficult) infinite dimensional case with non-convex energies; see also the references therein. However, when it comes to  \textit{rate independent} problems with \textit{state dependent dissipation potential}, the only paper addressing the existence of optimal minimizers is \cite{ris}. Therein, the infinite dimensional counterpart of \eqref{eq:n} has been  analyzeded regarding unique solvability. Thus, the manuscript \cite{ris} provides the basis for our investigations here, as it ensures that the control-to-state map is single valued. It is beyond the scope of the present work to  make assertions about its directional differentiability and we refer  to \cite{brok_ch} where this matter (along with strong stationarity) is examined in the one-dimensional case in the context of rate independent systems with \textit{state independent dissipation potential}.
   
With the unique solvability of \eqref{eq:n} at hand, the challenge concerning  \eqref{eq:min} arises from the  \textit{non-smooth}  character of the state equation, which is  due to the non-differentiability of the dissipation functional $\RR$. This excludes the application of standard adjoint techniques for the derivation of first-order necessary conditions in form of optimality  systems.

The purpose of this paper is to  derive optimality conditions for  \eqref{eq:min} by means of an approximation that involves  control problems governed by \textit{viscous} evolutions, see \eqref{eq:q1} below. We emphasize that this approach is novel. The classical literature relies on the prominent smoothening technique for the derivation of  optimality systems in qualified form  proposed by \cite{barbu84}, where one replaces the non-smoothness by a suitable smooth function and then lets the smoothening parameter vanish in the respective KKT-system.
We refer here to \cite{sww, bk, w3 }, which established optimality conditions for the control of  rate independent systems \textit{with state independent dissipation potential} by this method. See also the more recent contribution \cite{col} where a time discretization scheme is employed.

The philosophy that solutions to rate independent systems  arise   as limits of sequences of solutions to viscous systems for some parameter $\epsilon$ approaching $0$ is well-known from settings that deal with non-convex energies \cite{mr15}. Though our energy is convex, we will also make 
use of  this idea to  investigate the optimal control of \eqref{eq:min} by starting with the (optimal control of the)  corresponding viscous system. This provides the advantage that it can be rewritten as a \textit{non-smooth ODE} for fixed $\epsilon$, see \eqref{eq:syst_diff1}. 
However, deriving necessary optimality conditions for such non-smooth problems is a challenging issue even in finite dimensions. In \cite{ScheelScholtes2000} a detailed overview of various optimality conditions of different strength was introduced. The most rigorous stationarity concept is \textit{strong stationarity}. Roughly speaking, the strong stationarity conditions involve an optimality system, that is equivalent to the purely primal conditions saying that the directional derivative of the reduced objective in feasible directions is nonnegative (which is referred to as B stationarity). Thus, strong stationarity  can be seen as the "non-smooth" counterpart of  the prominent KKT conditions.
By making use of the limited differentiability properties of the control-to-state map, it has been well established lately that, for certain classes of  non-smooth equations and viscous systems, strong stationary optimality conditions can be provided. Cf.\,\cite{st_coup, aos} (viscous systems) and  \cite{paper, frac} (time-dependent PDEs/ODEs). 

Once the strong stationary optimality conditions for the control of the history-dependent viscous model \eqref{eq:q1} are established, passing to the limit $\e \searrow 0$ will yield an optimality system, that could be classified as C stationary \cite{ScheelScholtes2000}. If the value of the optimal state at the end of the process is larger than the desired state, this optimality system is of strong stationary type (Remark \ref{rem:ft}), i.e., it is complete. During the vanishing viscosity process we will   be confronted with the \textit{history-dependence} of the state equation, which as we will see, gives rise to additional challenges in terms of showing uniform bounds and convergence analysis.
Finally, let us emphasize that the idea presented in this paper fully applies to the control of other rate independent processes with state dependent dissipation potential, where the state dependency may happen though a Nemytskii operator, see also Remark \ref{rem:ft}.

The paper is organized as follows. After introducing the notation, we recall in section \ref{2} some  findings that were recently established in \cite{ris} and \cite{aos}. These concern the unique solvability of \eqref{eq:n} and the properties of its viscous counterpart. Section \ref{2} ends with a new result regarding the convergence of the viscous approximation, see Proposition \ref{prop}. In section \ref{3} we then start our investigation of the optimal control of \eqref{eq:min} by showing in a classical manner  that its local optimal solutions  can be approximated by local minima of control problems governed by viscous models. For the latter we then establish strong stationary optimality conditions in Lemma \ref{thm:ss_qsep1}. The entirely novel part of this section starts with Proposition \ref{bound}, where uniform bounds with respect to the viscous parameter are provided for the involved multiplier and adjoint state. Here we make use of a previous crucial finding from \cite{ris}, namely   the uniform Lipschitz continuity of the viscous solution map (Lemma \ref{lem:lip_c}). Moreover, the term involving the history variable must be carefully investigated with respect to its regularity and uniform boundedness. With Proposition \ref{bound} at hand, we then derive a first optimality system in Theorem \ref{thm:os}, under a mild smoothness assumption on the mapping $\kappa$ (Assumption \ref{k''}). Again,  the presence of history brings out some challenges, this time in the convergence analysis. The section ends with an improvement of the optimality conditions from Theorem \ref{thm:os}, cf.\,Proposition \ref{prop:aoc}. This finding is established under some additional requirements. Finally, in section \ref{4} we make a comparison between our final optimality system and the expected one. To derive the latter, we resort to a formal Lagrange approach. Here we conclude that the derived optimality conditions are strong stationary (i.e., they do not lack any information) if the value of the optimal state is large enough at the end of the process.
The paper ends with some comments concerning other related models (Remark \ref{rem:ft}).

\subsection*{Notation}
Throughout the paper, $T > 0$ is a fixed final time and $n\in \N$ is the fixed dimension of the euclidean space. If $X$ and $Y$ are Banach spaces, the notation $X \embed \embed Y$ means that $X$ is \bl{compactly embedded} in $Y$, while $X \dense Y$ means that the embedding is dense.
We  use the abbreviation
\begin{equation*}
\begin{aligned}
W^{1,p}_0(0,T;\R^n)&:=\{y\in W^{1,p}(0,T;\R^n):y(0)=0\}, \quad p\in[1,\infty],
\end{aligned}
\end{equation*}
and the dual of $W_0^{1,1}(0,T;\hoon)$ is denoted by $W^{-1,\infty}(0,T;\hoon)$.
For the dual pairing between $X$ and its dual we write $\dual{\cdot}{\cdot}_X$, while $(\cdot,\cdot)_{Y}$ is the notation used for the scalar product in a Hilbert space $Y$. The euclidean product is however denoted by $\dual{\cdot}{\cdot}_{\R^n}.$ For the adjoint operator of a linear and bounded mapping $A$ we write  $A^\star$. Weak derivatives  are sometimes denoted by a dot. 

The symbol $\partial f$ stands for the convex subdifferential, see e.g.\ \cite{rockaf}.
The mapping $\II_\CC:\R^n \to \{0,\infty\}$ is the indicator functional of the set $\CC \subset \R^n$, i.e., $\II_\CC(y)=0,$ if $y \in \CC$ and $\II_\CC(y)=\infty,$ otherwise.

For a vector $\eta \in \R^n$,we write $\eta \geq 0$ if each component of $\eta$ is nonnegative.
Non-linearities $f:\R \to \R$  sometimes act on vectors, in which case they become vector-valued, by associating to each (real-valued) vector component of $\eta \in \R^n$ the  value $f(\eta_i), i=1,...,n$. For convenience, we denote them by the same symbol and from the context it will be clear which one is meant. The associated Nemytskii operators are also denoted by the same symbol.

Throughout the paper, $c,C>0$ are generic constants that depend only on the fixed physical parameters. 
To emphasize the dependence of a constant  on a certain fixed parameter $M$ we sometimes write $c(M).$

\section{The control-to-state map and its viscous approximation}\label{2}

We begin this section by presenting some  results that were established in  \cite{ris}
 regarding \eqref{eq:n} and that will be useful throughout the paper.

\begin{assumption}\label{assu}
For the mappings associated with history  in \eqref{eq:n} we require the following:
 \begin{enumerate}
  \item \label{it:1}  The \textit{history operator} $\HH:L^1(0,T;\lo) \to W^{1,1}(0,T;\lo)$ is given by
\begin{equation*}
 [0,T] \ni t \mapsto \HH(y)(t):=\int_0^t y(s) \,ds +y_0 \in \lo,\end{equation*}
where  $y_0\in \lo.$
\item \label{it:2}
The non-linear function  $\kappa: \R \to [0,\infty)$ is assumed to be Lipschitz continuous with Lipschitz constant $L_\kappa>0$ and  differentiable.  Moreover, $\kappa' \in W^{1,2}(\R).$
 \end{enumerate}
\end{assumption} 
Assumption \ref{assu} is tacitly supposed to be true throughout the entire paper, without mentioning it every time.

\begin{definition}[Control-to-state map]
In all what follows,
$$S:H^1_0(0,T;\hoon) \to \hhyy$$ denotes the solution operator of \eqref{eq:n}. Note that this is well-defined \cite[Thm.\,6.4]{ris}.
\end{definition}

\begin{lemma}[Equivalent formulation, {\cite[Prop.\,4.2]{ris}}]\label{prop:ris}
The  problem  \eqref{eq:n} is equivalent to
  \begin{equation}\label{eq:ic_ris}  
-\alpha q(t) +\ell(t)-\F(q)(t) \in \partial \II_\CC(\dot q(t)) \quad  \ae (0,T).\end{equation}
In particular, $-\alpha q(t) +\ell(t)-\F(q)(t)\leq 0$ for all $t\in [0,T]$.
\end{lemma}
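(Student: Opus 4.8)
The plan is to unfold both sides of \eqref{eq:n} using the explicit forms of $\EE$ and $\RR$, and then to read off the pointwise sign condition from the structure of the normal cone of $\CC$. First I would treat the left-hand side: since $\EE(t,\cdot)$ is Fr\'echet differentiable on $\R^n$ with $\partial_q\EE(t,q)=\alpha q-\ell(t)$, the expression $-\partial_q\EE(t,q(t))$ is the single vector $-\alpha q(t)+\ell(t)$. Next I would decompose the dissipation functional: by \eqref{def:r}, for every fixed $\zeta$ one has $\RR(\zeta,\cdot)=\dual{\kappa(\zeta)}{\cdot}_{\R^n}+\II_\CC(\cdot)$, the sum of a continuous linear functional and the proper convex lower semicontinuous indicator $\II_\CC$. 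Because the linear part is everywhere differentiable, the sum rule for the convex subdifferential gives $\partial_{\dot q}\RR(\zeta,\eta)=\kappa(\zeta)+\partial\II_\CC(\eta)$; equivalently, one may argue directly with the variational inequality that defines the subdifferential, subtracting the linear term $\dual{\kappa(\HH(q)(t))}{\eta-\dot q(t)}_{\R^n}$ from both of its sides. Specializing $\zeta=\HH(q)(t)$ and using the abbreviation $\F=(\kappa\circ\HH)$, \eqref{eq:n} becomes $-\alpha q(t)+\ell(t)\in\F(q)(t)+\partial\II_\CC(\dot q(t))$ a.e.\ in $(0,T)$, which after rearranging is precisely \eqref{eq:ic_ris}. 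As each step is an equivalence, \eqref{eq:n} and \eqref{eq:ic_ris} are equivalent.

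For the ``in particular'' assertion I would use that $\partial\II_\CC$ coincides with the normal cone $N_\CC$ (which in particular forces $\dot q(t)\in\CC$ a.e.). Because $\CC$ is the nonnegative orthant of $\R^n$, at any $\eta\in\CC$ one has $N_\CC(\eta)=\{w\in\R^n: w_i\le 0 \text{ and } w_i\eta_i=0,\ i=1,\dots,n\}$; in particular every element of $N_\CC(\eta)$ is nonpositive in each component. Applying this to \eqref{eq:ic_ris} yields $-\alpha q(t)+\ell(t)-\F(q)(t)\le 0$ for a.e.\ $t\in(0,T)$. To upgrade this to every $t\in[0,T]$ I would invoke continuity: $q\in H^1_0(0,T;\R^n)$ and $\ell\in H^1_0(0,T;\R^n)$ are absolutely continuous, $\HH(q)\in W^{1,1}(0,T;\R^n)\hookrightarrow C([0,T];\R^n)$ by Assumption \ref{assu}.\ref{it:1}, and $\kappa$ is continuous, so $\F(q)$ is continuous; a continuous function that is $\le 0$ almost everywhere is $\le 0$ everywhere.

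I do not expect a genuine obstacle here, as the statement is essentially a bookkeeping computation. The points that deserve care are the passage between the abstract inclusion and the variational inequality — i.e.\ checking that the linear term $\dual{\kappa(\HH(q)(t))}{\cdot}_{\R^n}$ can be pulled out of the subdifferential cleanly, which is immediate from its differentiability — together with the explicit description of $N_\CC$ that supplies the sign, and the short continuity argument needed to promote the inequality from almost every $t$ to all $t$.
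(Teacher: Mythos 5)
Your argument is correct. Note that the paper itself does not prove this lemma — it is imported verbatim from \cite[Prop.\,4.2]{ris} — so there is no in-paper proof to compare against; your proposal supplies a sound self-contained justification along the expected lines: writing $\RR(\zeta,\cdot)=\dual{\kappa(\zeta)}{\cdot}_{\R^n}+\II_\CC(\cdot)$ and pulling the smooth linear part out of the subdifferential (equivalently, subtracting it from both sides of the defining variational inequality, which also forces $\dot q(t)\in\CC$ a.e.), identifying $\partial\II_\CC$ with the normal cone of the nonnegative orthant to get the componentwise sign a.e., and then using the continuity of $q$, $\ell$ and $\F(q)=\kappa(\HH(q))$ on $[0,T]$ to upgrade the inequality from almost every $t$ to all $t$.
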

 \begin{lemma}[Weak continuity of $S$, {\cite[Cor.\,7.1]{ris}}]\label{wc}
 The operator $S$ is weakly continuous from $H^{1}_0(0,T;\lo)$  to $H_0^{1}(0,T;\hon)$.
 \end{lemma}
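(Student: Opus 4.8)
The plan is to combine the equivalent formulation of Lemma~\ref{prop:ris} with compactness. I would take $\ell_k \rightharpoonup \ell$ in $H^1_0(0,T;\hon)$, set $q_k := S(\ell_k)$ and $\xi_k := -\alpha q_k + \ell_k - \F(q_k)$, so that by Lemma~\ref{prop:ris}
\begin{equation}\label{eq:wc_incl}
\xi_k(t) \in \partial\II_\CC(\dot q_k(t)) \quad \ae (0,T), \qquad \xi_k(t) \le 0 \quad \forall\, t\in[0,T].
\end{equation}
The first step is a uniform a priori estimate of the type $\|q_k\|_{H^1_0(0,T;\hon)} \le C\,(1+\|\ell_k\|_{H^1_0(0,T;\hon)})$, which is available from the well-posedness analysis in \cite{ris}; since $(\ell_k)$ is bounded in $H^1_0(0,T;\hon)$, so is $(q_k)$. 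Passing to a (not relabelled) subsequence, $q_k \rightharpoonup q$ in $H^1_0(0,T;\hon)$, and the compact embedding $H^1_0(0,T;\hon)\embed\embed C([0,T];\R^n)$ upgrades this to $q_k\to q$ in $C([0,T];\R^n)$; the same embedding applied to $(\ell_k)$ gives $\ell_k\to\ell$ in $C([0,T];\R^n)$.

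Next I would pass to the limit in \eqref{eq:wc_incl}. Since $t\mapsto\int_0^t(\cdot)\,ds+y_0$ is continuous on $C([0,T];\R^n)$ and $\kappa$ is Lipschitz, one obtains $\F(q_k)\to\F(q)$ in $C([0,T];\R^n)$, whence $\xi_k\to\xi:=-\alpha q+\ell-\F(q)$ strongly in $C([0,T];\R^n)$, a fortiori in $L^2(0,T;\R^n)$, while $\dot q_k\rightharpoonup\dot q$ in $L^2(0,T;\R^n)$. Because $\CC$ is a cone, \eqref{eq:wc_incl} forces $\dual{\xi_k(t)}{\dot q_k(t)}_{\R^n}=0$ a.e.; pairing the strongly convergent $\xi_k$ with the weakly convergent $\dot q_k$ then yields $\int_0^T\dual{\xi}{\dot q}_{\R^n}\,dt=\lim_k\int_0^T\dual{\xi_k}{\dot q_k}_{\R^n}\,dt=0$. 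On the other hand, for any measurable $\eta:(0,T)\to\CC$ with $\eta\in L^2(0,T;\R^n)$, \eqref{eq:wc_incl} gives $\dual{\xi_k(t)}{\eta(t)-\dot q_k(t)}_{\R^n}\le0$ a.e., hence, after integrating and letting $k\to\infty$, $\int_0^T\dual{\xi}{\eta}_{\R^n}\,dt\le0$.

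Adding the two relations gives $\int_0^T\dual{\xi(t)}{\eta(t)-\dot q(t)}_{\R^n}\,dt\le0$ for every such $\eta$; localizing over arbitrary measurable sets and using $\dot q(t)\in\CC$ a.e.\ (Mazur, since $\{w\in L^2(0,T;\R^n):\ w(t)\in\CC \text{ a.e.}\}$ is closed and convex) then yields $\xi(t)\in\partial\II_\CC(\dot q(t))$ a.e., i.e., $q$ solves \eqref{eq:ic_ris} with right-hand side $\ell$. By the uniqueness statement in \cite[Thm.\,6.4]{ris}, $q=S(\ell)$; since the limit does not depend on the chosen subsequence, a standard subsequence argument gives $S(\ell_k)\rightharpoonup S(\ell)$ in $H^1_0(0,T;\hon)$, which is the asserted weak continuity.

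I expect the only genuinely delicate point to be the uniform a priori bound on $(q_k)$ in $H^1_0(0,T;\hon)$: this is where the rate-independent structure and the $H^1$-regularity of the loads really enter, and it is exactly the estimate supplied by \cite{ris}. A secondary technical issue is the limit in the history term, where the smoothing effect of $\HH$ (it produces one extra time derivative) is what turns the weak convergence of $q_k$ into the strong convergence of $\F(q_k)$ needed both in the nonlinearity and in the orthogonality relation $\dual{\xi_k}{\dot q_k}_{\R^n}=0$.
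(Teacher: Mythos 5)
Your argument is correct, but note that the paper itself does not prove this lemma at all: it is quoted verbatim from \cite[Cor.\,7.1]{ris}, so there is no in-paper proof to match. What you have written is a sound self-contained reconstruction. The chain is fine: the uniform bound $\|q_k\|_{H^1_0(0,T;\hon)}\le C$ (which, as you say, is exactly the ingredient one must import from \cite{ris}; within this paper it corresponds to the $\e$-uniform estimate quoted as \eqref{eq:est1}, which survives the vanishing-viscosity limit by weak lower semicontinuity), the compact embedding $H^1_0(0,T;\hon)\embed\embed C([0,T];\R^n)$ giving uniform convergence of $q_k$, $\ell_k$ and hence of $\F(q_k)$, the cone property of $\CC$ yielding the orthogonality $\dual{\xi_k}{\dot q_k}_{\R^n}=0$ a.e., the strong--weak pairing to pass to the limit, Mazur to keep $\dot q(t)\in\CC$, localization to recover the pointwise inclusion of Lemma~\ref{prop:ris}, uniqueness from \cite[Thm.\,6.4]{ris}, and the subsequence--subsequence argument. (Two trivial points you should state explicitly: $q(0)=0$ follows from $q_k(0)=0$ and uniform convergence, and $\dot q_k(t)\in\CC$ a.e.\ is part of the inclusion since the subdifferential is empty outside $\CC$.) It is worth observing that your route differs from the closest analogue inside the paper, the proof of Proposition~\ref{prop}: there the limit passage for the viscous approximations is done through the integrated energy balance \eqref{for_l} and then delegated to the argument of \cite[Prop.\,4.3]{ris}, whereas you pass to the limit directly in the variational inequality, exploiting that the history term $\F(q_k)$ converges uniformly (the smoothing of $\HH$ plus the compact embedding), which makes $\xi_k$ strongly convergent and renders the energy identity unnecessary. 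Your approach is arguably more elementary and transparent for the rate-independent problem itself; the energy-balance route of Proposition~\ref{prop} is what one needs when, in addition, the viscous term $\e\|\dot q_\e\|^2$ has to be controlled.
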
 

\subsection*{The history-dependent viscous evolution}\label{sec:0}
In the rest of the section we are concerned with the viscous version of  \eqref{eq:n} and its approximation properties.
This reads as follows:
\begin{equation}\label{eq:q1}
-\partial_q \EE(t,q(t)) \in \partial_{\dot q} \RR_\e (\HH(q)(t),\dot{q}(t))\quad \text{a.e.\,in }(0,T), \quad q(0) = 0,
  \end{equation}where the stored energy is given by \eqref{def:e}.
  The viscous dissipation functional $\RR_\e:\R^n \times  \R^n \rightarrow [0,\infty]$ is defined as \begin{equation}\label{def:r1}
\RR_\e(\zeta,\eta):=\left\{ \begin{aligned} \kappa(\zeta) \,  \eta \;dx +\frac{\e}{2}\|\eta\|^2_{\R^n}, &\quad \text{if }\eta \in \CC,
\\\infty,  &\quad \text{otherwise,}\end{aligned} \right.
\end{equation}
where  $\e>0$ is the viscosity parameter.

\begin{definition}[Solution operator of the viscous problem]
Throughout the paper,
$$S_\e:\llun \to \hhyy$$ denotes the solution map associated to \eqref{eq:q1}. Note that this is well-defined, Lipschitz continuous  and directionally differentiable {\cite[Sec 4.1]{aos}}.
\end{definition}


\begin{lemma}[Equivalence to an ODE, {\cite[Prop.\,1]{aos}}]\label{lem:ode1}
 The viscous problem \eqref{eq:q1} is equivalent to   \begin{equation}\label{eq:syst_diff1}
 \dot q(t)   = \frac{1}{\epsilon} \max\{-\alpha  q(t) +\ell(t)-\F(q)(t),0\} \quad \text{in }(0,T),  \quad q(0) = 0.
 \end{equation}
   \end{lemma}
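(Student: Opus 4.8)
The plan is to show the two inclusions/equations are equivalent pointwise by analyzing the subdifferential structure of $\RR_\e$ in its second argument. First I would compute $\partial_{\dot q}\RR_\e(\zeta,\cdot)$ explicitly. Since $\RR_\e(\zeta,\eta) = \dual{\kappa(\zeta)}{\eta}_{\R^n} + \frac{\e}{2}\|\eta\|^2_{\R^n} + \II_\CC(\eta)$, the sum rule for convex subdifferentials (valid here because the quadratic and linear terms are everywhere finite and continuous) gives $\partial_{\dot q}\RR_\e(\zeta,\eta) = \kappa(\zeta) + \e\,\eta + \partial\II_\CC(\eta)$. Plugging in $\zeta = \HH(q)(t)$, $\eta = \dot q(t)$, and recalling $-\partial_q\EE(t,q(t)) = -\alpha q(t) + \ell(t)$ from \eqref{def:e}, the viscous inclusion \eqref{eq:q1} becomes
\begin{equation*}
-\alpha q(t) + \ell(t) - \kappa(\HH(q)(t)) - \e\,\dot q(t) \in \partial\II_\CC(\dot q(t)) \quad \ae(0,T),
\end{equation*}
i.e. $-\alpha q(t) + \ell(t) - \F(q)(t) - \e\,\dot q(t) \in \partial\II_\CC(\dot q(t))$, which is exactly \eqref{eq:ic_ris} with an extra viscous term — this mirrors Lemma \ref{prop:ris}.

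Next I would translate this inclusion into the explicit ODE. Writing $w(t) := -\alpha q(t) + \ell(t) - \F(q)(t)$, the inclusion reads $w(t) - \e\,\dot q(t) \in \partial\II_\CC(\dot q(t))$, i.e. $\dot q(t)$ solves the variational inequality $\dual{w(t) - \e\,\dot q(t)}{\eta - \dot q(t)}_{\R^n} \le 0$ for all $\eta \in \CC$. This is precisely the characterization of $\dot q(t) = \mathrm{proj}_\CC(w(t)/\e)$, the metric projection onto the closed convex cone $\CC = \{\eta : \eta_i \ge 0\}$. Because $\CC$ is the nonnegative orthant, the projection acts componentwise: $\mathrm{proj}_\CC(v)_i = \max\{v_i,0\}$. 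Hence $\dot q(t) = \frac{1}{\e}\max\{w(t),0\}$ componentwise, which is \eqref{eq:syst_diff1}. Conversely, reversing these steps (the variational-inequality characterization of the projection is an equivalence) shows any solution of \eqref{eq:syst_diff1} solves \eqref{eq:q1}. I would also note the initial conditions $q(0)=0$ match trivially, and that \eqref{eq:syst_diff1} has a unique solution by standard ODE theory (the right-hand side is Lipschitz in $q$, using Assumption \ref{assu}.\ref{it:2} for $\kappa$ and boundedness of $\HH$ on bounded sets), which is consistent with $S_\e$ being well-defined.

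The main obstacle, such as it is, is bookkeeping rather than depth: one must be careful that the subdifferential sum rule and the projection characterization are applied pointwise a.e.\ in $t$ with measurable selections, and that the equivalence between "$\dot q(t) \in$ something for a.e.\ $t$" and "$\dot q$ equals a fixed formula for a.e.\ $t$" is genuinely two-way. Since the paper already has the non-viscous analogue in Lemma \ref{prop:ris}, the cleanest exposition is probably to imitate that proof, or simply to invoke \cite[Prop.\,1]{aos} directly as cited — but the short self-contained argument above via the orthant projection $\max\{\cdot,0\}$ is the essential content.
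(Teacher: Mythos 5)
Your argument is correct, and since the paper itself offers no proof of this lemma (it only cites \cite[Prop.\,1]{aos}), your self-contained route --- Moreau--Rockafellar sum rule to isolate $\partial\II_\CC(\dot q(t))$, then the variational characterization of the metric projection onto the nonnegative orthant, which acts componentwise as $\max\{\cdot,0\}$ --- is exactly the mechanism the paper itself invokes later through the identity $\omega-P_{\partial I_\CC(0)}\omega=\max(\omega,0)$ in the proof of Lemma \ref{thm:ss_qsep1}. The only cosmetic caveat is your closing aside that \eqref{eq:syst_diff1} is uniquely solvable by ``standard ODE theory'': the right-hand side contains the nonlocal history term $\F(q)$, so this requires a Volterra-type fixed-point/Gronwall argument rather than plain Picard--Lindel\"of, but well-posedness is not part of the statement and the equivalence argument is unaffected.
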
 

 \begin{lemma}[Uniform Lipschitz continuity of the viscous solution map, {\cite[Prop.\,7.2]{ris}}]\label{lem:lip_c}
 Let  $\e>0$ be fixed and let $\ell_1,\ell_2 \in H^1_0(0,T;\hoon)$ be given such that   $\|\ell_i\|_{H^1(0,T;\hoon)} \leq M, i=1,2,$ for some fixed $ M>0$. Then, it holds 
$$\|S_\e(\ell_1)-S_\e(\ell_2)\|_{C([0,T];\hon)} \leq c( M)\,\|\ell_1-\ell_2\|_{W^{1,1}(0,T;\hoon)}, $$where $c( M)>0$ is independent of $\e$.\end{lemma}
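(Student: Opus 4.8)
The plan is to work with the ODE reformulation \eqref{eq:syst_diff1} and with the equivalent inclusion form of \eqref{eq:q1}, and to run an energy argument arranged so that no factor $1/\e$ ever enters. First I would record two preliminary facts, both uniform in $\e$. Since $\dot q_i\ge 0$ componentwise by \eqref{eq:syst_diff1}, each $q_i:=S_\e(\ell_i)$ is nonnegative, hence $t\mapsto\HH(q_i)(t)$ is nondecreasing in every component (Assumption \ref{assu}.\ref{it:1}). Moreover, a comparison argument in \eqref{eq:syst_diff1} — on the set $\{q_{i,k}>\|\ell_i\|_{C([0,T];\R^n)}/\alpha\}$ the maximum in \eqref{eq:syst_diff1} vanishes because $\F\ge 0$, so $q_{i,k}$ cannot exceed $\|\ell_i\|_{C}/\alpha$ — together with the embedding $H^1(0,T;\R^n)\hookrightarrow C([0,T];\R^n)$ gives the a priori bound $\|q_i\|_{C([0,T];\R^n)}\le c(M)$. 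In particular the curves $\HH(q_i)$ and all their convex combinations stay in a fixed bounded interval $J=J(M)\subset\R$.

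Next I would subtract the two inclusions $-\alpha q_i(t)+\ell_i(t)-\F(q_i)(t)-\e\dot q_i(t)\in\partial\II_\CC(\dot q_i(t))$, test the difference with $\dot q_1-\dot q_2$, and use the monotonicity of $\partial\II_\CC$ to arrive at $\e\|\dot{\delta q}(t)\|_{\R^n}^2\le\dual{-\alpha\,\delta q(t)+\delta\ell(t)-\delta\varphi(t)}{\dot{\delta q}(t)}_{\R^n}$ a.e., where $\delta q:=q_1-q_2$, $\delta\ell:=\ell_1-\ell_2$ and $\delta\varphi:=\F(q_1)-\F(q_2)$. Integrating over $(0,\tau)$, using $\delta q(0)=0$ and discarding the nonnegative viscous term $\e\int_0^\tau\|\dot{\delta q}\|_{\R^n}^2$, I obtain
\begin{equation*}
\tfrac{\alpha}{2}\,\|\delta q(\tau)\|_{\R^n}^2\le\int_0^\tau\dual{\delta\ell}{\dot{\delta q}}_{\R^n}\,dt-\int_0^\tau\dual{\delta\varphi}{\dot{\delta q}}_{\R^n}\,dt .
\end{equation*}
The structural point is that no $1/\e$ appears, and that $\dot{\delta q}$ — which is only $L^2$ in time — is removed from both integrals by integration by parts in $t$. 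For the load the resulting term $\dual{\delta\ell(\tau)}{\delta q(\tau)}-\int_0^\tau\dual{\dot{\delta\ell}}{\delta q}$ is controlled, since $\delta\ell(0)=0$, by a small multiple of $\|\delta q\|_{C([0,\tau];\R^n)}^2$ plus $c\,\|\delta\ell\|_{W^{1,1}(0,T;\R^n)}^2$ via Young's inequality.

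The delicate term is the history integral, because $\tfrac{d}{dt}\delta\varphi$ contains the rough derivatives $\dot q_i$. I would factor $\delta\varphi(t)=A(t)\,w(t)$ with $w(t):=\HH(q_1)(t)-\HH(q_2)(t)=\int_0^t\delta q$ and $A(t):=\int_0^1\kappa'(\gamma_s(t))\,ds$, $\gamma_s(t):=(1-s)\HH(q_2)(t)+s\,\HH(q_1)(t)$ (integral mean value theorem, acting componentwise). Then $\|A\|_{C([0,T];\R^n)}\le\|\kappa'\|_{L^\infty(\R)}\le c$, and after a further integration by parts $\tfrac{d}{dt}\delta\varphi=\dot A\,w+A\,\delta q$ with $\dot A(t)=\int_0^1\kappa''(\gamma_s(t))\,\dot\gamma_s(t)\,ds$. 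Because each $\gamma_s$ is nondecreasing and Lipschitz and, by Assumption \ref{assu}.\ref{it:2}, $\kappa''\in L^2(\R)\subset L^1(J)$, the change of variables formula yields componentwise $\int_0^\tau|\kappa''((\gamma_s)_k)|\,(\gamma_s)_k'\,dt=\int_{(\gamma_s)_k(0)}^{(\gamma_s)_k(\tau)}|\kappa''|\le|J|^{1/2}\|\kappa''\|_{L^2(\R)}$, hence $\int_0^T\|\dot A(t)\|_{\R^n}\,dt\le c(M)$ uniformly in $\e$. Together with $\|w(t)\|_{\R^n}\le\int_0^\tau\|\delta q\|_{\R^n}$ and Cauchy–Schwarz in time, all contributions coming from $-\int_0^\tau\dual{\delta\varphi}{\dot{\delta q}}$ are then bounded by a fraction of $\|\delta q\|_{C([0,\tau];\R^n)}^2$ plus $c(M)\int_0^\tau\|\delta q(t)\|_{\R^n}^2\,dt$.

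Collecting everything, absorbing the $\|\delta q\|_{C([0,\tau];\R^n)}^2$-terms into the left-hand side, passing to the supremum over $[0,\sigma]$ (the surviving right-hand side is nondecreasing in the upper limit) and applying Gronwall's lemma with $\sigma=T$ gives $\|S_\e(\ell_1)-S_\e(\ell_2)\|_{C([0,T];\R^n)}^2\le c(M)\,\|\ell_1-\ell_2\|_{W^{1,1}(0,T;\R^n)}^2$, as claimed. I expect the main obstacle to be precisely the history contribution: one has to spot the factorisation $\delta\varphi=A\,w$ and to exploit the monotonicity of the curves $\HH(q_i)$ — which is why the sign $\dot q_i\ge 0$ is needed — in order to bound $\int_0^T\|\dot A\|$ independently of $\e$ using only the $L^1_{\mathrm{loc}}$ (rather than $L^\infty$) information on $\kappa''$; the remainder is a routine, if $\e$-sensitive, energy estimate.
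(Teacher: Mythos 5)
The paper never proves this lemma itself: it is quoted verbatim from \cite[Prop.\,7.2]{ris}, so there is no in-text argument to measure your proof against; I can only judge it on its own merits, and on those it is correct and essentially self-contained. Your two preliminary observations are sound ($\dot q_i\ge 0$ from the $\max$ in \eqref{eq:syst_diff1}, hence $q_i\ge 0$ and $\HH(q_i)$ componentwise nondecreasing; the $\e$-uniform bound $\|q_i\|_{C([0,T];\R^n)}\le c(M)$, which could alternatively be taken from \eqref{eq:est1}); testing the difference of the two viscous inclusions with $\dot q_1-\dot q_2$ and using monotonicity of $\partial \II_\CC$ indeed eliminates every $1/\e$; and the decisive step — factoring $\F(q_1)-\F(q_2)=A\,w$ and bounding $\int_0^T\|\dot A\|_{\R^n}\,dt\le c(M)$ uniformly in $\e$ via the change of variables along the monotone curves $\gamma_s$, using only $\kappa''\in L^2(\R)$ — is exactly what makes a genuine Lipschitz estimate possible, since $\kappa'$ is only $C^{0,1/2}$ on bounded sets and the naive splitting $\kappa'(\HH(q_1))q_1-\kappa'(\HH(q_2))q_2$ would only give a H\"older-type bound. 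The one place where you should add a line of justification is the a.e.\ differentiation of $A(t)=\int_0^1\kappa'(\gamma_s(t))\,ds$: for each fixed $s$, $\kappa'\circ\gamma_s$ is absolutely continuous because $\kappa'\in W^{1,2}(\R)$ is absolutely continuous on the bounded interval $J(M)$ and $\gamma_s$ is monotone and Lipschitz, and the identity $\kappa'(\gamma_s(t))-\kappa'(\gamma_s(0))=\int_0^t\kappa''(\gamma_s(\sigma))\dot\gamma_s(\sigma)\,d\sigma$ is precisely the monotone change-of-variables formula you already invoke (with the usual convention on the null set where $\kappa''$ is undefined, whose preimage under $\gamma_s$ carries no mass where $\dot\gamma_s>0$); Fubini then yields $\dot A=\int_0^1\kappa''(\gamma_s)\dot\gamma_s\,ds\in L^1(0,T;\R^n)$. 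This is consistent with the paper's own use of the chain rule for compositions of Sobolev functions in the proof of Proposition \ref{bound}, so, subject to that small elaboration, your argument delivers the claimed $\e$-independent constant $c(M)$ and the $W^{1,1}\to C$ Lipschitz estimate.
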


 \begin{proposition}[Convergence of the viscous approximation]\label{prop}
Let $\{\ell_\e\}\subset H^1_0(0,T;\lo)$ be a given subsequence. If $\ell_\e \weakly \ell $ in $H^1(0,T;\lo)$, then
 it holds 
$$S_\e(\ell_\e) \weakly S(\ell) \quad \text{ in } \hhyy, \  \text{as }\e \searrow 0.$$
Moreover, if $\ell_\e \to \ell $ in $H^1(0,T;\lo)$, one has 
$$S_\e(\ell_\e) \to S(\ell) \quad \text{ in } \hhyy, \  \text{as }\e \searrow 0.$$
\end{proposition}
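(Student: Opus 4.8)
The plan is to pass to the limit in the ODE formulation \eqref{eq:syst_diff1} and identify the limit with the unique solution of \eqref{eq:ic_ris}. Set $q_\e := S_\e(\ell_\e)$. First I would establish a uniform (in $\e$) bound on $\{q_\e\}$ in $\hhyy$. Since $\ell_\e \weakly \ell$ in $H^1(0,T;\lo)$, the sequence $\{\ell_\e\}$ is bounded in $H^1(0,T;\lo)$, hence bounded in $W^{1,1}(0,T;\lo)$, and also $\ell_\e(0)=0$; the uniform Lipschitz estimate of Lemma \ref{lem:lip_c} (applied with, say, $\ell_1 = \ell_\e$ and $\ell_2 = 0$, using that $S_\e(0)$ is controlled) then yields $\|q_\e\|_{C([0,T];\hon)} \le C$ independently of $\e$. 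To bound $\dot q_\e$, I would use \eqref{eq:syst_diff1} together with the already-established uniform bound on $q_\e$ and on $\ell_\e$ (in $C([0,T];\R^n)$, by Sobolev embedding) and the Lipschitz continuity of $\kappa$ and boundedness of $\HH$ on bounded sets, to get that $\dot q_\e = \tfrac1\e \max\{-\alpha q_\e + \ell_\e - \F(q_\e),0\}$ stays in a bounded subset of, at least, $L^\infty$ — here one has to be a little careful, because the factor $1/\e$ blows up, so the bound must come from the structure: in fact the right bound is the one from \cite{ris} guaranteeing $q_\e$ lies in a bounded set of $\hhyy$ (or $W^{1,\infty}$) uniformly in $\e$, which I would cite or reprove via a Gronwall/monotonicity argument on \eqref{eq:syst_diff1}.

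With a uniform bound in $\hhyy$ in hand, I extract a (not relabeled) subsequence with $q_\e \weakly q$ in $H^1_0(0,T;\R^n)$ and, by compact embedding $H^1(0,T;\R^n) \embed\embed C([0,T];\R^n)$, $q_\e \to q$ uniformly on $[0,T]$. Since $\HH$ is an integral operator (Assumption \ref{assu}.\ref{it:1}), $\HH(q_\e) \to \HH(q)$ uniformly, and by continuity of $\kappa$, $\F(q_\e) \to \F(q)$ uniformly; combined with $\ell_\e \weakly \ell$ this gives $-\alpha q_\e + \ell_\e - \F(q_\e) \weakly -\alpha q + \ell - \F(q)$, say in $L^2(0,T;\R^n)$. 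The remaining task is to show $q$ solves \eqref{eq:ic_ris}, i.e. that $-\alpha q + \ell - \F(q) \in \partial \II_\CC(\dot q)$ a.e. The cleanest route is the standard one for limits of viscous approximations: multiply \eqref{eq:q1} (the subdifferential inclusion for $\RR_\e$) by $\dot q_\e$, use positive homogeneity and the $\tfrac\e2\|\dot q_\e\|^2$ term, and pass to the limit using weak lower semicontinuity of the quadratic term together with $\e\|\dot q_\e\|^2 \ge 0$; alternatively, test the variational inequality
\[
\dual{-\alpha q_\e + \ell_\e - \F(q_\e) - \e\dot q_\e}{\eta - \dot q_\e(t)}_{\R^n} \le 0 \quad \forall\, \eta \in \CC,
\]
integrate in $t$, and pass to the limit, discarding the $-\e\int_0^T\dual{\dot q_\e}{\eta}\,dt$ term (which vanishes since $\dot q_\e$ is bounded in $L^2$) and handling $-\e\int_0^T\|\dot q_\e\|^2\,dt \le 0$, to conclude $\int_0^T\dual{-\alpha q + \ell - \F(q)}{\eta - \dot q}_{\R^n}\,dt \le 0$ for all $\eta \in \CC$, hence the pointwise inclusion by a localization argument. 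By uniqueness of the solution of \eqref{eq:n} (well-posedness of $S$, \cite[Thm.\,6.4]{ris}), $q = S(\ell)$, and since every subsequence has a further subsequence converging weakly to this same limit, the whole sequence converges: $S_\e(\ell_\e) \weakly S(\ell)$ in $\hhyy$.

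For the second, strong-convergence claim, suppose now $\ell_\e \to \ell$ in $H^1(0,T;\lo)$. We already have $q_\e \weakly q = S(\ell)$ in $H^1_0(0,T;\R^n)$ and $q_\e \to q$ in $C([0,T];\R^n)$, so it remains to upgrade to norm convergence of the derivatives, i.e. $\dot q_\e \to \dot q$ in $L^2(0,T;\R^n)$. The natural tool is again the uniform Lipschitz estimate: write $S_\e(\ell_\e) - S(\ell) = (S_\e(\ell_\e) - S_\e(\ell)) + (S_\e(\ell) - S(\ell))$; the second term goes to $0$ in $\hhyy$ by the first part of the Proposition applied to the constant sequence $\ell$, but this only gives weak convergence, so that decomposition alone is not enough. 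Instead I would obtain strong $L^2$ convergence of the derivatives directly from an energy-type identity: since $q_\e$ is the viscous solution, testing the equation with $\dot q_\e$ and using rate independence of the limit gives $\limsup_\e \e\|\dot q_\e\|_{L^2}^2$ controlled, and more to the point one shows $\limsup_\e \int_0^T \dual{-\alpha q_\e + \ell_\e - \F(q_\e)}{\dot q_\e}\,dt \le \int_0^T \dual{-\alpha q + \ell - \F(q)}{\dot q}\,dt$ using the strong convergence of $\ell_\e$ and of $q_\e$ in $C$; comparing with the weak convergence of $\dot q_\e$ and the limit inclusion then forces $\dot q_\e \to \dot q$ strongly in $L^2$. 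I expect the main obstacle to be exactly this last point — extracting strong convergence of $\dot q_\e$, because the $1/\e$ factor in \eqref{eq:syst_diff1} means no crude estimate works and one must genuinely exploit the variational/monotone structure (or the uniform Lipschitz bound of Lemma \ref{lem:lip_c}, which is the $\e$-independent ingredient the paper highlights) to close the argument, and one must track the history term $\F(q_\e)$ carefully throughout, since it couples the whole time interval.
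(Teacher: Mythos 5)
Your treatment of the first (weak convergence) assertion is essentially correct, and it takes a mildly different route from the paper: you pass to the limit directly in the pointwise variational inequality $\dual{z_\e(t)-\e\dot q_\e(t)}{\eta-\dot q_\e(t)}_{\R^n}\le 0$ for $\eta\in\CC$, where $z_\e:=-\alpha q_\e+\ell_\e-\F(q_\e)$, using the $\e$-uniform bound $\|q_\e\|_{\hhyy}\le C\|\ell_\e\|_{H^1}$ from \cite[Prop.\,3.6]{ris}, the compact embedding into $C([0,T];\R^n)$ (which also gives $\ell_\e\to\ell$ and hence $z_\e\to \bar z:=-\alpha q+\ell-\F(q)$ uniformly), the weak closedness of $\{v\in L^2(0,T;\R^n):v(t)\in\CC\text{ a.e.}\}$, and a localization step; the paper instead passes to the limit in the integrated energy balance \eqref{for_l} and then invokes the identification argument of \cite[Prop.\,4.3]{ris}. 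Both identify the limit with $S(\ell)$ via uniqueness, so this part of your plan is fine.

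The genuine gap is in the strong-convergence part. Testing the undifferentiated viscous equation with $\dot q_\e$ cannot produce what you need: by \eqref{eq:syst_diff1} one has $\e\dot q_\e^i=\max\{z_\e^i,0\}$ and $\dot q_\e^i=0$ wherever $z_\e^i\le 0$, so pointwise $\dual{z_\e}{\dot q_\e}_{\R^n}=\e\|\dot q_\e\|_{\R^n}^2$, hence $\int_0^T\dual{z_\e}{\dot q_\e}_{\R^n}\,dt=\e\|\dot q_\e\|_{L^2}^2\to 0$, while in the limit $\dual{\bar z}{\dot q}_{\R^n}=0$ a.e.\ by complementarity. Your key inequality $\limsup_\e\int_0^T\dual{z_\e}{\dot q_\e}_{\R^n}\,dt\le\int_0^T\dual{\bar z}{\dot q}_{\R^n}\,dt$ is therefore automatically true (it reads $0\le 0$, and in any case follows from the strong--weak pairing $z_\e\to\bar z$ in $L^2$, $\dot q_\e\weakly\dot q$); it carries no information on $\|\dot q_\e\|_{L^2}^2$ and cannot force $\dot q_\e\to\dot q$ in $L^2$. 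What is needed is an identity quadratic in $\dot q_\e$ with an $\e$-independent coefficient, and it lives at the level of the time-differentiated equation: the paper uses \eqref{eq:1},
\begin{equation*}
\alpha\|\dot q_\e(t)\|_{\R^n}^2+\frac{\e}{2}\frac{d}{dt}\|\dot q_\e(t)\|_{\R^n}^2=\dual{\dot\ell_\e(t)-\F'(q_\e)(\dot q_\e)(t)}{\dot q_\e(t)}_{\R^n}\quad\text{a.e.\ in }(0,T),
\end{equation*}
whose time integration gives $\alpha\|q_\e\|_{H^1_0(0,T;\R^n)}^2+\int_0^T\dual{\F'(q_\e)(\dot q_\e)}{\dot q_\e}_{\R^n}\,dt\le\int_0^T\dual{\dot\ell_\e}{\dot q_\e}_{\R^n}\,dt$. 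The history term, a priori also quadratic in $\dot q_\e$, is rewritten via \eqref{kap} as $\int_0^T\dual{\kappa'(\HH(q_\e))q_\e}{\dot q_\e}_{\R^n}\,dt$, i.e.\ linear in $\dot q_\e$, and its limit is identified using $\HH(q_\e)\to\HH(q)$ in $C([0,T];\R^n)$ together with the H\"older continuity of $\kappa'$ on bounded intervals (a consequence of $\kappa'\in W^{1,2}(\R)$, Assumption \ref{assu}). Combined with $\ell_\e\to\ell$ strongly in $H^1$ and the analogous identity for the limit problem, this yields $\limsup_\e\alpha\|q_\e\|_{H^1_0(0,T;\R^n)}^2\le\alpha\|q\|_{H^1_0(0,T;\R^n)}^2$, which together with weak convergence gives norm convergence and hence $q_\e\to q$ in $\hhyy$. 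Without this derivative-level identity (or an equivalent substitute) your sketch does not close; and, as you note yourself, the splitting via Lemma \ref{lem:lip_c} is also insufficient, since the strong $H^1$-convergence of $S_\e(\ell)-S(\ell)$ is precisely the statement being proved.
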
 

 \begin{proof}Throughout the proof, we abbreviate for simplicity $q_\e:=S_\e(\ell_\e)$.
According to 
 \cite[Prop.\,3.6]{ris}, it holds \begin{equation}\label{eq:est1}
\|q_\e\|_{\hhyy} \leq C\,\|\ell_\e\|_{H^1(0,T;\hoon)},
 \end{equation}where $C>0$ is independent of $\e,$ so that we can extract a weakly convergent subsequence with weak limit $\hat q \in \hhyy.$ 
 By \cite[Prop.\,2.7]{ris}, $q_\e$ satisfies
 \begin{equation}\label{for_l}\begin{aligned}
\int_0^t  \kappa(\HH(q_\e)(\tau))\dot q_\e (\tau)\,  d \tau& +\e \int_0^t \| \dot q_\e (\tau)\|^2_{\R^n} \,d \tau 
+\frac{\alpha}{2}\| q_\e(t)\|_{\R^n}^2-{\ell_\e(t)}{q_\e(t)} 
\\&\qquad \qquad \qquad \qquad =- \int_0^t \hspace{-0.15cm} {\dot \ell_\e(\tau)}{q_\e(\tau)} d \tau  \quad \forall\,0\leq t \leq T.\end{aligned}\end{equation}
 Passing to the limit in \eqref{for_l}, where one uses the compact embedding $H^1(0,T;\hoon) \embed \embed C([0,T];\R^n)$ and Assumption \ref{assu}, yields that $\hat q$ satisfies 
  \begin{equation*}\begin{aligned}
\int_0^t  \kappa(\HH(\hat q)(\tau))\dot {\hat q} (\tau)\,  d \tau& +\frac{\alpha}{2}\| \hat q(t)\|_{\R^n}^2-{\ell(t)}{\hat q(t)} 
 =- \int_0^t \hspace{-0.15cm} {\dot \ell(\tau)}{\hat q(\tau)} d \tau  \quad \forall\,0\leq t \leq T.\end{aligned}\end{equation*}
By arguing as in the proof of \cite[Prop.\,4.3]{ris}, we finally obtain that $\hat q$ solves  
 \eqref{eq:n} with right hand side $\ell$, i.e., $\hat q=S(\ell)$.
To show the second assertion, we recall the identity \begin{equation}\label{eq:1}
  \alpha \| \dot q_\e(t)\|_{\lo}^2 +\frac{\e}{2} \frac{d}{dt}  \| \dot q_\e(t)\|_{\lo}^2=\dual{\dot \ell_\e (t)-\F'(q_\e)(\dot q_\e)(t)}{\dot q_\e(t)}_{\hon} \quad  \ae (0,T),\end{equation}which was established in the proof of \cite[Prop.\,3.6]{ris}.
 Integrating over time implies
   \begin{equation}\label{eq:syst_dif22}
\begin{aligned}
\alpha \| q_\e\|_{H^1_0(0,T;\hon)}^2 + \int_0^T\dual{\F'(q_\e)(\dot q_\e)(t)}{\dot q_\e(t)}_{\hon} \,dt  \leq \int_0^T\dual{\dot \ell_\e (t)}{\dot q_\e(t)}_{\hon} \,dt.
  \end{aligned}\end{equation}
In light of Assumption \ref{assu}, we have  \begin{equation}\label{kap}
\begin{aligned}
\int_0^T \dual{\F'(y)(\dot y)(t)}{\dot y(t)}_{\lo}\,dt 
=\int_0^T \dual{\kappa'(\HH(y)(t))(y(t))}{ \dot y(t)}_{\lo}\,dt
\end{aligned}
  \end{equation}for all $y \in H_0^1(0,T;\R^n)$, cf.\,the proof of \cite[Lem.\,3.2]{ris}. Since 
\begin{equation}\label{strong}
q_\e \weakly q \quad \text{ in } \hhyy \  \text{as }\e \searrow 0,\end{equation}
where $q:=S(\ell),$ we deduce
 $$\HH(q_\e) \to \HH(q) \quad \text{in }C([0,T];\R^n),$$in view of the definition of $\HH$. Further, we have $\kappa'\in C^{0,1/2}[-M,M],$ for each $M >0,$ as a result of Assumption \ref{assu}. Thus, setting $M:=\sup_{\e>0} \|\HH(q_\e)\|_{C([0,T];\R^n)}+1,$ implies 
  \begin{equation}\label{k'_cont}
  \kappa'(\HH(q_\e)) \to \kappa'(\HH(q)) \quad \text{in }C([0,T];\R^n).
  \end{equation}
 By relying  on \eqref{kap}, \eqref{strong} and $H_0^1(0,T;\R^n) \embed \embed C([0,T];\R^n)$ we then arrive at the convergence 
 
  $$
  \int_0^T\dual{\F'(q_\e)(\dot q_\e)(t)}{\dot q_\e(t)}_{\hon} \,dt \to \int_0^T\dual{\F'(q)(\dot q)(t)}{\dot q(t)}_{\hon} \,dt.$$
 Now we go back to  \eqref{eq:syst_dif22}, where we use again \eqref{strong} as well as  the assumption $\ell_\e \to \ell $ in $H^1(0,T;\lo)$. This gives in turn     \begin{equation}\label{use}
\begin{aligned}
\alpha \| q\|_{H^1_0(0,T;\hon)}^2
&\leq \liminf \alpha \| q_\e\|_{H^1_0(0,T;\hon)}^2
\\&\leq \limsup \alpha \| q_\e\|_{H^1_0(0,T;\hon)}^2  
\\& \leq \int_0^T\dual{\dot \ell (t)}{\dot q(t)}_{\hon} \,dt-\int_0^T\dual{\F'(q)(\dot q)(t)}{\dot q(t)}_{\hon} \,dt 
\\&\quad =\alpha \| q\|_{H^1_0(0,T;\hon)}^2
  \end{aligned}\end{equation}We underline that the last inequality follows by the exact same arguments as \eqref{eq:1}, see the proof of \cite[Prop.\,3.6]{ris}. In view of \eqref{strong} and \eqref{use}, we get 
$$
q_\e \to q \quad \text{ in } \hhyy \  \text{ as }\e \searrow 0,$$
which is the desired assertion.
 \end{proof}
  
\section{Optimal control}\label{3}
This section focuses on the investigation of the optimal control of the rate independent  model with history \eqref{eq:n}. For convenience, we recall the associated optimization problem:
 \begin{equation}\label{eq:oc}\tag{P}
 \left.
 \begin{aligned}
  \min \quad & j(q)+\frac{1}{2}\|q(T)-q_d\|^2_{\R^n}+\frac{1}{2}\|\ell\|^2_{H^1(0,T;\lo)}\\
  \text{s.t.} \quad & (\ell,q) \in {H_0^1(0,T;\lo)}\times {H_0^1(0,T;\hon)},
  \\\quad & q \text{ solves }
\eqref{eq:n} \text{ w.r.h.s.\,}\ell,
 \end{aligned}
 \quad \right\}
\end{equation}where $q_d\in \R^n $ is the desired value of the state at the end of the process. 
In the sequel, the first part of the objective that acts on the state is supposed to fulfil the following
\begin{assumption}\label{assu:j}
  The functional $j: L^2(0,T;\R^n) \to \R$ is 
  continuously differentiable.  \end{assumption}
  Assumption \ref{assu:j} is tacitly assumed in all what follows, without mentioning it every time.
\begin{proposition}
The optimization problem \eqref{eq:oc} admits at least one solution.
\end{proposition}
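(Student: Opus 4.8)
The plan is to use the direct method of the calculus of variations. Let $\{\ell_k\} \subset H^1_0(0,T;\R^n)$ be a minimizing sequence for \eqref{eq:oc}, with associated states $q_k := S(\ell_k)$. First I would observe that the objective is bounded below (each term is nonnegative), so the infimum $m$ is finite and we may assume $j(q_k)+\frac12\|q_k(T)-q_d\|_{\R^n}^2+\frac12\|\ell_k\|_{H^1(0,T;\R^n)}^2 \to m$. In particular the term $\frac12\|\ell_k\|_{H^1(0,T;\R^n)}^2$ stays bounded, so $\{\ell_k\}$ is bounded in $H^1_0(0,T;\R^n)$. Since this space is a Hilbert space, after extracting a subsequence (not relabeled) we obtain $\ell_k \weakly \bar\ell$ in $H^1(0,T;\R^n)$ for some $\bar\ell \in H^1_0(0,T;\R^n)$ (the constraint $\ell(0)=0$ passes to the weak limit, e.g.\ via the compact embedding $H^1(0,T;\R^n)\embed\embed C([0,T];\R^n)$).

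Next I would pass to the limit in the state relation. By the weak continuity of the control-to-state map (Lemma \ref{wc}), $\ell_k \weakly \bar\ell$ in $H^1_0(0,T;\R^n)$ implies $q_k = S(\ell_k) \weakly S(\bar\ell) =: \bar q$ in $H^1_0(0,T;\R^n)$; in particular $\bar q$ solves \eqref{eq:n} with right-hand side $\bar\ell$, so the pair $(\bar\ell,\bar q)$ is feasible for \eqref{eq:oc}. It remains to show that the objective does not increase in the limit. Using the compact embedding $H^1_0(0,T;\R^n) \embed\embed C([0,T];\R^n) \embed L^2(0,T;\R^n)$, we get $q_k \to \bar q$ strongly in $L^2(0,T;\R^n)$ and $q_k(T) \to \bar q(T)$ in $\R^n$. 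Hence $j(q_k) \to j(\bar q)$ by continuity of $j$ (Assumption \ref{assu:j}), and $\frac12\|q_k(T)-q_d\|_{\R^n}^2 \to \frac12\|\bar q(T)-q_d\|_{\R^n}^2$. For the control term, weak lower semicontinuity of the norm on $H^1(0,T;\R^n)$ gives $\frac12\|\bar\ell\|_{H^1(0,T;\R^n)}^2 \le \liminf_k \frac12\|\ell_k\|_{H^1(0,T;\R^n)}^2$. Combining, the objective evaluated at $(\bar\ell,\bar q)$ is $\le m$, and since $(\bar\ell,\bar q)$ is feasible it is also $\ge m$; therefore $(\bar\ell,\bar q)$ is a global minimizer.

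The only genuinely nontrivial ingredient here is the weak continuity of $S$, which is already provided by Lemma \ref{wc} (from \cite{ris}); everything else is the standard coercivity/weak-lower-semicontinuity argument. I do not expect any serious obstacle: the regularizing term $\frac12\|\ell\|_{H^1}^2$ supplies coercivity in exactly the space where $S$ is weakly continuous, the finite-dimensionality of the range $\R^n$ makes the trace $q\mapsto q(T)$ and the tracking terms trivially weakly continuous via compact embedding, and the feasible set is weakly sequentially closed precisely because of Lemma \ref{wc}. If one wanted to be careful about one point, it would be confirming that the constraint $\ell(0)=0$ is preserved under weak convergence, but this follows immediately from the continuity of the evaluation functional $\ell \mapsto \ell(0)$ on $H^1(0,T;\R^n)$ (or from $C([0,T];\R^n)$-convergence along a subsequence).
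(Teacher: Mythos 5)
Your argument is exactly the one the paper intends: its proof is the one-line statement ``direct method of calculus of variations in combination with Lemma \ref{wc}'', and you have simply spelled out the standard coercivity/weak-lower-semicontinuity steps, using Lemma \ref{wc} for weak sequential closedness of the feasible set and the compact embedding $H^1(0,T;\R^n)\embed\embed C([0,T];\R^n)$ for the tracking terms. The only slight imprecision is the claim that ``each term is nonnegative'' --- $j$ is merely assumed continuously differentiable, not nonnegative --- but this is an imprecision shared with the paper's own (unstated) argument and does not change the route.
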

\begin{proof}
The statement follows by the direct method of calculus of variations in combination with Lemma \ref{wc}.
\end{proof}

 In order to derive necessary optimality conditions in qualified form,
 we start by
proving that each local optimum $\ll$ of \eqref{eq:oc} can be approximated via local optima of the control problem:
 \begin{equation}\label{eq:min_e}\tag{$P_\e$}
 \left.
 \begin{aligned}
  \min \quad & j(q)+\frac{1}{2}\|q(T)-q_d\|^2_{\R^n}+\frac{1}{2}\|\ell\|^2_{H^1(0,T;\lo)}+\frac{1}{2}\|\ell-\bar \ell\|^2_{H^1(0,T;\lo)}\\
  \text{s.t.} \quad & (\ell,q) \in {H_0^1(0,T;\lo)}\times {H_0^1(0,T;\hon)},
  \\\quad & q \text{ solves }
\eqref{eq:q1} \text{ w.r.h.s.\,}\ell.
 \end{aligned}
 \quad \right\}
\end{equation}

We point out that, unlike in the classical literature, the approximating control problems are non-smooth. However, one can establish strong stationary optimality conditions for the control of \eqref{eq:min_e}, meaning that the approximating optimality systems are as strong as the KKT conditions in the smooth case (Lemma \ref{thm:ss_qsep1} and Remark \ref{rem:ss} below). In fact, keeping the  non-smoothness in the approximating setting enables us later on to prove a C stationary like optimality system. If the optimal state at the end of the process is large enough, we even arrive at the complete optimality conditions,  see Remark \ref{rem:st} below.

\begin{lemma}\label{approx}
For each local optimum $\ll$ of \eqref{eq:oc} there exists a sequence $\{\ell_\e\}_{\e>0}$ of local minimizers of $\{ \eqref{eq:min_e}\}_{\e>0}$ so that
 \begin{equation}\label{l_conv1}
\ell_{\e} \to \ll \quad  \text{in }H_0^1(0,T;\lo), \end{equation}
and
\begin{equation}\label{y_conv1}
  S_\e(\ell_{\e}) \to S(\ll)  \quad \text{in } \hhyy.
  \end{equation}
\end{lemma}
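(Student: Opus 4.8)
The plan is to use the standard "approximation of local optima by penalization" argument, going back to Barbu, but now adapted to the viscous approximation rather than a smoothening of the nonlinearity. Let $\ll$ be a local optimum of \eqref{eq:oc}, so there is some $\rho>0$ such that $\ll$ minimizes the objective of \eqref{eq:oc} over the closed ball $\bar B_\rho(\ll)\subset H_0^1(0,T;\lo)$ intersected with the feasible set. For each $\e>0$ consider the auxiliary problem \eqref{eq:min_e} but \emph{additionally} restricted to $\ell\in\bar B_\rho(\ll)$; since the admissible set is nonempty (it contains $\ll$, as $\ell=\ll$, $q=S_\e(\ll)$ is feasible) and the cost is weakly lower semicontinuous with the extra coercive term $\tfrac12\|\ell-\ll\|^2_{H^1}$, the direct method together with Lemma \ref{wc}'s analogue for $S_\e$ (namely weak continuity of $S_\e$, which follows from its Lipschitz continuity on bounded sets, or directly from Proposition \ref{prop} applied with a fixed $\e$) gives a global minimizer $\ell_\e$ of this restricted problem. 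I would then show that for $\e$ small enough the constraint $\ell_\e\in\bar B_\rho(\ll)$ is inactive, so that $\ell_\e$ is in fact a \emph{local} minimizer of the unrestricted \eqref{eq:min_e}; this is exactly what the statement requires.

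The key step is the convergence argument. Denote by $J_\e(\ell)$ the objective of \eqref{eq:min_e} evaluated at $(\ell,S_\e(\ell))$ and by $J(\ell)$ that of \eqref{eq:oc} at $(\ell,S(\ell))$. First, from the minimality of $\ell_\e$ on $\bar B_\rho(\ll)$ and feasibility of $\ll$ we get
\begin{equation*}
J_\e(\ell_\e)=j(S_\e(\ell_\e))+\tfrac12\|S_\e(\ell_\e)(T)-q_d\|_{\R^n}^2+\tfrac12\|\ell_\e\|_{H^1}^2+\tfrac12\|\ell_\e-\ll\|_{H^1}^2\leq J_\e(\ll)=j(S_\e(\ll))+\tfrac12\|S_\e(\ll)(T)-q_d\|_{\R^n}^2+\tfrac12\|\ll\|_{H^1}^2.
\end{equation*}
By Proposition \ref{prop} (strong convergence part, applied to the constant sequence $\ll$), $S_\e(\ll)\to S(\ll)$ in $\hhyy$, hence $J_\e(\ll)\to J(\ll)$. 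In particular $\{J_\e(\ell_\e)\}$ is bounded, which forces $\{\ell_\e\}$ to be bounded in $H_0^1(0,T;\lo)$; extract a weakly convergent subsequence $\ell_\e\weakly\tilde\ell$ in $H^1(0,T;\lo)$, with $\tilde\ell\in\bar B_\rho(\ll)$. Applying Proposition \ref{prop} (weak convergence part) yields $S_\e(\ell_\e)\weakly S(\tilde\ell)$ in $\hhyy$, so $S_\e(\ell_\e)(T)\to S(\tilde\ell)(T)$ and $S_\e(\ell_\e)\to S(\tilde\ell)$ in $L^2(0,T;\R^n)$ by compact embedding, and $j$ is continuous on $L^2$. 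Passing to the $\liminf$ in the displayed inequality and using weak lower semicontinuity of the $H^1$-norms gives
\begin{equation*}
J(\tilde\ell)+\tfrac12\|\tilde\ell-\ll\|_{H^1}^2\leq\liminf_\e J_\e(\ell_\e)\leq\limsup_\e J_\e(\ell_\e)\leq\lim_\e J_\e(\ll)=J(\ll).
\end{equation*}
Since $\tilde\ell$ is feasible for \eqref{eq:oc} and lies in $\bar B_\rho(\ll)$, local optimality of $\ll$ gives $J(\ll)\leq J(\tilde\ell)$; combined with the above this forces $\|\tilde\ell-\ll\|_{H^1}=0$, i.e.\ $\tilde\ell=\ll$, and moreover $J(\ll)\leq\liminf J_\e(\ell_\e)\leq\limsup J_\e(\ell_\e)\leq J(\ll)$, so $J_\e(\ell_\e)\to J(\ll)$. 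From equality of the limits of the norm terms and the already-established convergence of the $j$- and tracking-terms we extract that $\|\ell_\e\|_{H^1}\to\|\ll\|_{H^1}$, hence $\ell_\e\to\ll$ strongly in $H^1_0(0,T;\lo)$ (weak convergence plus norm convergence in a Hilbert space). Then a final application of Proposition \ref{prop} with this strong convergence gives $S_\e(\ell_\e)\to S(\ll)$ strongly in $\hhyy$, which is \eqref{y_conv1}. A standard subsequence-subsequence argument upgrades this to convergence of the whole sequence.

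It remains to note that, since $\ell_\e\to\ll$ strongly and $\ll$ is an interior-in-the-relevant-sense local optimum, for $\e$ sufficiently small $\ell_\e$ lies in the \emph{open} ball $B_\rho(\ll)$, so the ball constraint is locally inactive and $\ell_\e$ is a genuine local minimizer of \eqref{eq:min_e}; this establishes the claim. The main obstacle I anticipate is not any single estimate but making sure the two directions of Proposition \ref{prop} are invoked cleanly and in the right order: the weak version to get compactness and identify the limit control, and the strong version (once $\ell_\e\to\ll$ is known) to promote the state convergence to strong convergence in $\hhyy$. One must also be slightly careful that the extra penalty term $\tfrac12\|\ell-\ll\|^2_{H^1}$ is what pins the weak limit down to $\ll$ even before knowing strong convergence — this is the usual role of the penalization and should be spelled out.
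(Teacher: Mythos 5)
Your proposal is correct and follows essentially the same route as the paper's proof: restrict \eqref{eq:min_e} to the ball of local optimality, apply the direct method (using the Lipschitz continuity of $S_\e$ from $L^2$ to $H^1_0$ and the compact embedding), use the penalty term $\tfrac12\|\ell-\ll\|^2_{H^1}$ together with Proposition \ref{prop} in a squeeze argument to identify the weak limit as $\ll$ and upgrade to strong convergence, then invoke Proposition \ref{prop} once more for the states and the classical Barbu argument to drop the ball constraint. The only cosmetic difference is that you deduce strong convergence of the controls from weak convergence plus norm convergence, while the paper reads it off from the vanishing of the penalty term; these are equivalent.
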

\begin{proof}The result is shown by classical arguments, see for instance \cite{barbu84, tiba}. For convenience and completeness, we give a detailed proof here.
 Let $B_{H^1_0(0,T;\lo )}(\ll,  \rho),\rho>0,$ be the ball of local optimality of $\ll$. By the direct method of calculus of variations, 
we see that \begin{equation}\label{min_e}
 \left.
 \begin{aligned}
  \min_{\ell \in B_{H^1_0(0,T;\lo )}(\ll,  \rho)} \quad & j(q)+\frac{1}{2}\|q(T)-q_d\|^2_{\R^n}+\frac{1}{2}\|\ell\|^2_{H^1(0,T;\lo)}+\frac{1}{2}\|\ell-\bar \ell\|^2_{H^1(0,T;\lo)}\\
  \text{s.t.} \quad & q \text{ solves }
\eqref{eq:q1} \text{ w.r.h.s.\,}\ell
 \end{aligned}
 \quad \right\}
\end{equation}
admits a global solution $\ell_{\e} \in H^1_0(0,T;\lo)$; note that here we use the compact embedding $H_0^1(0,T;\R^n) \embed \embed L^2(0,T;\R^n) $, the Lipschitz continuity of $S_\e:L^2(0,T;\R^n) \to H_0^1(0,T;\R^n)$ and the continuity of $j$.  Since $\ell_{\e}  \in B_{H^1_0(0,T;\lo)}(\ll,  \rho),$ we can select a subsequence with 
\begin{equation}\label{l}
\ell_{\e}  \weakly \widetilde \ell \quad \text{in }H^1_0(0,T;\lo),
\end{equation}
where $\widetilde \ell  \in B_{H^1_0(0,T;\lo)}(\ll,  \rho).$
For simplicity, we abbreviate in the following   \begin{subequations} \begin{gather}
\JJ(\ell):=j(S(\ell))+\frac{1}{2}\|S(\ell)(T)-q_d\|^2_{\R^n}+\frac{1}{2}\|\ell\|_{H^1(0,T;\lo)}^2 ,\label{jj}\\
\JJ_{\e}(\ell):=j(S_\e(\ell))+\frac{1}{2}\|S_\e(\ell)(T)-q_d\|^2_{\R^n}+\frac{1}{2}\|\ell\|_{H^1(0,T;\lo)}^2+\frac{1}{2}\|\ell-\ll\|_{H^1(0,T;\lo)}^2  \label{jn}
\end{gather}\end{subequations} \normalsize for all $\ell \in H^1(0,T;\lo)$.
Due to Proposition \ref{prop},  it holds 
  \begin{equation}\label{j}
  \JJ(\ll)=\lim_{\e \to 0} j(S_\e(\ll))+\frac{1}{2}\|S_\e(\ll)(T)-q_d\|^2_{\R^n}+\frac{1}{2}\|\ll\|_{H^1(0,T;\lo)}^2\overset{\eqref{jn}}{=}\lim_{\e \to 0}  \JJ_{\e}(\ll) \geq \limsup_{\e \to 0}   \JJ_{\e}(\ell_{\e}),
  \end{equation}\normalsize where for the last inequality we relied on the fact that $\ell_{\e}$ is a global minimizer of \eqref{min_e} and that $\ll$ is admissible for \eqref{min_e}. In view of  \eqref{jn}, \eqref{j} can be continued as 
   \begin{equation}\label{j1}\begin{aligned}
  \JJ(\ll) &\geq \limsup_{\e \to 0}   j(S_\e(\ell_\e))+\frac{1}{2}\|S_\e(\ell_\e)(T)-q_d\|^2_{\R^n}+\frac{1}{2}\|\ell_\e\|_{H^1(0,T;\lo)}^2+\frac{1}{2}\|\ell_{\e}-\ll\|_{H^1(0,T;\lo)}^2
   \\&\quad   \geq \liminf_{\e \to 0}   j(S_\e(\ell_\e))+\frac{1}{2}\|S_\e(\ell_\e)(T)-q_d\|^2_{\R^n}+\frac{1}{2}\|\ell_\e\|_{H^1(0,T;\lo)}^2+\frac{1}{2}\|\ell_{\e}-\ll\|_{H^1(0,T;\lo)}^2
      \\&\qquad   \geq    j(S(\widetilde \ell))+\frac{1}{2}\|S(\widetilde \ell)(T)-q_d\|^2_{\R^n}+\frac{1}{2}\|\widetilde \ell\|_{H^1(0,T;\lo)}^2    +\frac{1}{2}\|\widetilde \ell-\ll\|_{H^1(0,T;\lo)}^2
  \geq   \JJ(\ll),
\end{aligned}  \end{equation}where we used Proposition \ref{prop} in combination with \eqref{l} and Assumption \ref{assu:j}; note that for the last inequality in \eqref{j1} we employed the fact that $\widetilde \ell \in B_{H^1_0(0,T;\lo )}(\ll,  \rho)$.
From \eqref{j1} we obtain that
\begin{align*}
\JJ(\ll)& = \lim_{\e \to 0}     j(S_\e(\ell_\e))+\frac{1}{2}\|S_\e(\ell_\e)(T)-q_d\|^2_{\R^n}+\frac{1}{2}\|\ell_\e\|_{H^1(0,T;\lo)}^2+\frac{1}{2}\|\ell_{\e}-\ll\|_{H^1(0,T;\lo)}^2
\\&= \lim_{\e \to 0}     j(S_\e(\ell_\e))+\frac{1}{2}\|S_\e(\ell_\e)(T)-q_d\|^2_{\R^n}+\frac{1}{2}\|\ell_\e\|_{H^1(0,T;\lo)}^2,
\end{align*}
whence   the convergence  
 \begin{equation*}
\ell_{\e} \to \ll \quad  \text{in }H^1(0,T;\lo) \end{equation*}
follows.
As a consequence, Proposition \ref{prop} gives in turn 
\begin{equation*}
  S_\e(\ell_{\e}) \to S(\ll)  \quad \text{in } \hhyy.
  \end{equation*}
A classical argument \cite{barbu84} finally shows that $\ell_\e$ is a local minimizer of \eqref{eq:min_e} for $\e>0$ sufficiently small.
\end{proof}

Strong stationary optimality conditions for the control of the infinite-dimensional  viscous  problem with history  \eqref{eq:q1} have been established in  {\cite[Thm.\,12]{aos}}. However, the control space in that case was $L^2(0,T)$ instead of $H_0^1(0,T)$.
For convenience of the reader and for the sake of completeness, we give a detailed  proof below.
\begin{lemma}[Strong stationarity for the optimal control of the viscous  model ]\label{thm:ss_qsep1}
 Let $ \ell_\e \in \hhyn$ be locally optimal for \eqref{eq:min_e} with associated state  $
q_\e \in H^{2}_0(0,T;\hoon).$   Then, there exists a unique adjoint state
$
  \xi_\e \in H^{1}(0,T;\hoon)$ and a unique multiplier $\lambda_\e \in L^{\infty}(0,T;\R^n)$ such that the following system is satisfied 
 \begin{subequations}\label{eq:strongstat_q1}
 \begin{gather}
  -\dot \xi_\e+\alpha \lambda_\e+[\F'( q_\e)]^\star \lambda_\e= j'( q_\e) \  \text{ in  }L^{2}(0,T;\hoon), \quad \xi_\e(T) =q_\e(T)-q_d,\label{eq:adjoint1_q1} \\[1mm]
 \left. \begin{aligned} 
  \lambda_\e^i(t)&=\frac{1}{\epsilon} \raisebox{3pt}{$\chi$}_{\{   z_\e^i>0\}}(t) \xi^i_\e(t) \quad \text{a.e.\ where  } z_\e^i(t) \neq 0,\\
 \lambda_\e^i(t) &\in \big[0,\frac{1}{\epsilon} \xi^i_\e(t) \big]  \quad \text{a.e.\ where  } z_\e^i(t) = 0,\quad \forall\,i=1,...,n,\end{aligned}\right\}\label{eq:signcond_q}
\\[1mm] \dual{\lambda_\e}{v}_{H^1(0,T;\lo)}+(\ell_\e,v)_{H^1(0,T;\lo)}+(\ell_\e-\ll,v)_{H^1(0,T;\lo)}=0\ \ \forall\, v \in H_0^1(0,T;\lo),\label{eq:grad_q1}
 \end{gather} 
 \end{subequations}    where we abbreviate $ z_\e:=-\alpha  q_\e+\ell_\e-\F( q_\e )$.\end{lemma}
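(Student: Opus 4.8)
Since $S_\varepsilon:L^2(0,T;\R^n)\to H^1_0(0,T;\R^n)$ is Lipschitz and directionally differentiable, the reduced functional $\JJ_\varepsilon$ (cf.\ \eqref{jn}) is directionally differentiable, and local optimality of $\ell_\varepsilon$ yields the purely primal condition $\JJ_\varepsilon'(\ell_\varepsilon;v)\ge 0$ for all $v\in H^1_0(0,T;\R^n)$. The first task is to compute $S_\varepsilon'(\ell_\varepsilon;v)=:\delta q$. Differentiating \eqref{eq:syst_diff1} and using that the directional derivative of $t\mapsto\max\{z_\varepsilon(t),0\}$ in the direction $\delta z:=-\alpha\,\delta q+v-\F'(q_\varepsilon)\delta q$ is the componentwise expression $\raisebox{1pt}{$\chi$}_{\{z_\varepsilon^i>0\}}\delta z^i$ on $\{z_\varepsilon^i\neq 0\}$ and $\max\{\delta z^i,0\}$ on $\{z_\varepsilon^i=0\}$, one sees $\delta q$ solves a linearized ODE which, because of the $\max$ on the biactive set $\{z_\varepsilon^i=0\}$, is itself a \emph{non-smooth} (VI-type) equation for $\delta q$. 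Writing the primal condition as $j'(q_\varepsilon)\delta q + (q_\varepsilon(T)-q_d,\delta q(T))_{\R^n} + (\ell_\varepsilon,v)_{H^1} + (\ell_\varepsilon-\bar\ell,v)_{H^1}\ge 0$ for all $v$, the goal is to decouple $v$ from $\delta q$ via an adjoint state.

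\textbf{Key steps.} First I would introduce the adjoint state $\xi_\varepsilon\in H^1(0,T;\R^n)$ as the solution of the backward equation \eqref{eq:adjoint1_q1} with terminal condition $\xi_\varepsilon(T)=q_\varepsilon(T)-q_d$, where the multiplier $\lambda_\varepsilon$ is for now a free $L^\infty$ function to be pinned down; testing \eqref{eq:adjoint1_q1} with $\delta q$, integrating by parts, and using the linearized state equation to substitute $\alpha\,\delta q+\F'(q_\varepsilon)\delta q = v - \varepsilon\,\delta\dot q$ on the strongly active set and the VI on the biactive set, one rewrites $j'(q_\varepsilon)\delta q + (q_\varepsilon(T)-q_d,\delta q(T))$ as $\dual{\lambda_\varepsilon}{v}_{H^1}$ plus a remainder supported on the biactive set. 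The crucial point is then to choose $\lambda_\varepsilon$ so that: (i) on $\{z_\varepsilon^i\neq 0\}$ it satisfies $\lambda_\varepsilon^i=\tfrac1\varepsilon\chi_{\{z_\varepsilon^i>0\}}\xi_\varepsilon^i$, which is forced by the linearized equation there; (ii) on the biactive set $\{z_\varepsilon^i=0\}$ the remainder term vanishes for all admissible directions — this is where one exploits that $v$ can be taken arbitrary in $H^1_0$ (density/richness of directions), forcing the sign condition $\lambda_\varepsilon^i\in[0,\tfrac1\varepsilon\xi_\varepsilon^i]$ and, simultaneously, the complementarity built into \eqref{eq:signcond_q}. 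This simultaneous fixing of $\lambda_\varepsilon$ and derivation of the sign condition is the heart of the strong stationarity argument: one shows that the biactive remainder, which has the form $\langle \tfrac1\varepsilon\xi_\varepsilon - \lambda_\varepsilon, \max\{\delta z,0\}\rangle$ on that set, is both $\ge 0$ and $\le 0$ as $v$ ranges over a suitable cone, hence zero, and this pins down $\lambda_\varepsilon$. Then \eqref{eq:grad_q1} is exactly the decoupled primal condition $\dual{\lambda_\varepsilon}{v}_{H^1}+(\ell_\varepsilon,v)_{H^1}+(\ell_\varepsilon-\bar\ell,v)_{H^1}=0$, valid as an equality since $v$ ranges over the whole space $H^1_0$. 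Uniqueness of $\lambda_\varepsilon$ follows from \eqref{eq:grad_q1} (it determines $\lambda_\varepsilon$ as a Riesz representative) and then uniqueness of $\xi_\varepsilon$ from the well-posed backward ODE \eqref{eq:adjoint1_q1}.

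\textbf{Main obstacle.} The delicate step is the treatment of the biactive set: one must show that the set of feasible linearized directions $\delta q$ is rich enough — equivalently, that the image of the direction map $v\mapsto\delta q$, when restricted to components on $\{z_\varepsilon^i=0\}$, is a full cone of nonnegative-type perturbations — so that the sign condition can actually be extracted rather than merely a weaker (e.g.\ C- or M-stationary) condition. This uses the specific structure of the viscous ODE \eqref{eq:syst_diff1}: the $\tfrac1\varepsilon$ factor and the fact that the nonlinearity $\F'(q_\varepsilon)$ is a bounded (history-type) perturbation mean the linearized VI is solvable and surjective enough onto the relevant direction cone, so that a "constraint-qualification-free" argument in the spirit of \cite{aos} goes through. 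A secondary technical point is the regularity bookkeeping: verifying $q_\varepsilon\in H^2_0(0,T;\R^n)$ (hence $z_\varepsilon$ continuous, so the active/biactive sets are well-defined pointwise), that $\F'(q_\varepsilon)$ maps $H^1_0$ into itself with an adjoint acting on $L^2$, and that the resulting $\xi_\varepsilon$ indeed lies in $H^1(0,T;\R^n)$ and $\lambda_\varepsilon$ in $L^\infty(0,T;\R^n)$ — the last from $\lambda_\varepsilon^i=\tfrac1\varepsilon\chi_{\{z_\varepsilon^i>0\}}\xi_\varepsilon^i$ plus the bound on the biactive set. Apart from these, the computation is the standard adjoint-calculus-plus-VI manipulation and I would not expect surprises.
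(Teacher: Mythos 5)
Your overall strategy (B-stationarity of the reduced problem, an adjoint state to decouple the control from the linearized state, extraction of the sign condition from richness of directions in the spirit of \cite{aos}) is the right one, but as written it has a genuine gap at the point where the multiplier comes into existence. You introduce $\xi_\e$ as the solution of the backward equation \eqref{eq:adjoint1_q1} with $\lambda_\e$ ``a free $L^\infty$ function to be pinned down'', and then want to pin $\lambda_\e$ down by imposing \eqref{eq:signcond_q} on $\{z_\e^i\neq 0\}$ and vanishing of the biactive remainder. This is circular: $\xi_\e$ depends on all of $\lambda_\e$ through the backward ODE, while $\lambda_\e$ is supposed to be recovered from $\xi_\e$; you never establish that a pair $(\xi_\e,\lambda_\e)$ with these properties exists, and ``the remainder is both $\ge 0$ and $\le 0$, hence zero'' is a property one verifies for the correct multiplier, not a construction of it. The paper resolves precisely this by a different ordering: the a priori bound $\|S_\e'(\ell_\e;v)\|_{L^2(0,T;\hon)}\le K\|v\|_{L^2(0,T;\hoon)}$ from \cite{aos}, combined with the first-order condition \eqref{eq:vi}, allows a Hahn--Banach argument that produces $\lambda_\e\in L^2(0,T;\R^n)$ satisfying \eqref{eq:grad_q1} by construction; $\xi_\e$ is then \emph{defined} by integrating the adjoint equation backwards from $q_\e(T)-q_d$, so \eqref{eq:adjoint1_q1} holds by construction and only \eqref{eq:signcond_q} remains to be proved. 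Some functional-analytic existence step of this kind is indispensable in your route as well, and it is the one ingredient your proposal omits.

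Concerning the ``main obstacle'' you name, the surjectivity onto the biactive directions is handled in the paper by an exact and elementary device which you only gesture at: after pairing \eqref{eq:adjoint1_q1} with $\delta q_\e$ and \eqref{eq:syst_dif} with $\xi_\e$ and using \eqref{eq:vi}, one tests the resulting inequality with the reverse-engineered direction $\dl:=\eta+\alpha\rho+\F'(q_\e)(\rho)$, where $\eta\in L^2(0,T;\lo)$ is arbitrary and $\rho:=\int_0^{\cdot}\frac1\e\max\,'(z_\e(s);\eta(s))\,ds$, so that $\rho=S_\e'(\ell_\e;\dl)$ and the argument of $\max\,'$ is exactly $\eta$. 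This yields $(\frac1\e\max\,'(z_\e;\eta),\xi_\e)_{L^2(0,T;\hoon)}-(\lambda_\e,\eta)_{L^2(0,T;\hoon)}\ge 0$ for every $\eta$, and testing with $\eta\ge 0$ and $\eta\le 0$ gives \eqref{eq:signcond_q} pointwise; no cone approximation or constraint qualification is needed, only the density $H_0^1(0,T;\lo)\dense L^2(0,T;\lo)$ to pass from $H^1_0$ to $L^2$ directions. Your remaining bookkeeping (uniqueness of $\lambda_\e$ from \eqref{eq:grad_q1} by density, uniqueness of $\xi_\e$ from the backward equation, $L^\infty$-regularity of $\lambda_\e$ from \eqref{eq:signcond_q} and $\xi_\e\in H^1\embed L^\infty$, $H^2$-regularity of $q_\e$ from \cite{ris}) is consistent with the paper.
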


\begin{proof}We first observe that the higher regularity of the state is due to \cite[Prop.\,3.6]{ris}.
According to \cite[Prop.\,3]{aos}, $S_\e$ is directionally differentiable. 
Its directional derivative $\delta q_\e:=S_\e'(\ell_\e;v)$ at  $\ell_\e $ in direction $v \in \llun$ is the unique solution of 
\begin{equation}\label{eq:syst_dif}\begin{aligned}
 \dot {\delta q_\e}(t) & = \frac{1}{\epsilon} \max\,'(z_\e(t);[-\alpha  \delta q_\e(t) +v(t)-\F'(q_\e)(\delta q_\e)(t)]) \quad \text{ a.e.\ in } (0,T),  \\ \delta q_\e(0) &= 0.
\end{aligned} \end{equation}
To see this, we refer to reader to \cite[Lem.\,7, Prop.\,2]{aos}; note that 
$$\omega-P_{\partial I_\CC(0)}\omega=\max(\omega,0) \quad \forall\,\omega \in \R^n$$where $P_{\partial I_\CC(0)}:\R^n \to \R^n$ stands for the projection operator on the set $\partial I_\CC(0):=\{\mu \in \R^n: \mu_i \leq 0, \ i=1,...,n\}$.

As $ \ell_\e \in \hhyn$ is locally optimal for \eqref{eq:min_e}, it satisfies the first order necessary optimality condition
\begin{equation}\label{eq:vi}
j'(q_\e) S_\e'(\ell_\e;v)  +\dual{q_\e(T)-q_d}{S_\e'(\ell_\e;v)(T)}_{\R^n}+(\ell_\e,v)_{H^1(0,T;\lo)}+(\ell_\e-\ll,v)_{H^1(0,T;\lo)}\geq 0\end{equation}
for all $v \in H_0^1(0,T;\lo)$. Since there is a constant $K>0$, independent of $\delta \ell$, so that 
$$
\|S_\e'( \ell_\e;v)\|_{L^2(0,T;\hon)} \leq K\, \|v\|_{L^2(0,T;\hoon)} \quad \forall\,\dl \in L^2(0,T;\hoon),$$see \cite[Lem.\,8]{aos}, Hahn-Banach theorem gives in turn that there exists $\lambda_\e \in L^{2}(0,T;\R^n)$ that satisfies \eqref{eq:grad_q1}. Further, 
we define 
$$ \xi_\e(t):=q_\e(T)-q_d+\int_t^T (-\alpha \lambda_\e-[\F'( q_\e)]^\star \lambda_\e+ j'( q_\e))(s)\,ds \quad \forall\,t\in [0,T],$$
so that \eqref{eq:adjoint1_q1} is true; note that $\xi_\e$ has the desired regularity.
Now, we proceed towards showing \eqref{eq:signcond_q}, by testing \eqref{eq:adjoint1_q1} with $\delta q_\e=S_\e'(\ell_\e;v)$ and \eqref{eq:syst_dif} with $\xi_\e$. In view of \eqref{eq:vi}, we arrrive at
\begin{equation}\begin{aligned}
&j'(q_\e) \delta q_\e  +\dual{q_\e(T)-q_d}{\delta q_\e(T)}_{\R^n}+(\ell_\e,v)_{H^1(0,T;\lo)}+(\ell_\e-\ll,v)_{H^1(0,T;\lo)}
\\&=( -\dot \xi_\e+\alpha \lambda_\e+[\F'( q_\e)]^\star \lambda_\e,\delta q_\e)_{L^2(0,T;\hoon)}+\dual{q_\e(T)-q_d}{\delta q_\e(T)}_{\R^n}- \dual{\lambda_\e}{v}_{H^1(0,T;\lo)}
\\&=(\dot \delta q_\e,\xi_\e)_{L^2(0,T;\hoon)}+(\alpha \lambda_\e+[\F'( q_\e)]^\star \lambda_\e,\delta q_\e)_{L^2(0,T;\hoon)}- \dual{\lambda_\e}{v}_{H^1(0,T;\lo)}
\\&=(\frac{1}{\epsilon} \max\,'(z_\e;[-\alpha  \delta q_\e +v-\F'(q_\e)(\delta q_\e)]),\xi_\e)_{L^2(0,T;\hoon)}
\\&\quad -(\lambda_\e, [-\alpha \delta q_\e+v-\F'( q_\e) (\delta q_\e)])_{L^2(0,T;\hoon)}
\geq 0\ \ \forall\, v \in H_0^1(0,T;\lo).
\end{aligned} \end{equation}
A density argument based on the embedding $H_0^1(0,T;\lo)\dense L^2(0,T;\lo)$ and the continuity of $\max\,'(z_\e;\cdot):L^2(0,T;\R^n) \to L^2(0,T;\R^n)$ then shows that
\begin{equation}\begin{aligned}\label{test}
&(\frac{1}{\epsilon} \max\,'(z_\e;[-\alpha  \delta q_\e +v-\F'(q_\e)(\delta q_\e)]),\xi_\e)_{L^2(0,T;\hoon)}
\\&\quad -(\lambda_\e, [-\alpha \delta q_\e+v-\F'( q_\e) (\delta q_\e)])_{L^2(0,T;\hoon)}
\geq 0\ \ \forall\, v \in L^2(0,T;\lo).
\end{aligned} \end{equation}
Now let $\eta \in L^2(0,T;\lo)$ be arbitrary but fixed. Then, if we define 
$$\rho(t):=\int_0^t \frac{1}{\epsilon} \max\,'(z_\e(s);\eta(s))\,ds \quad \forall\,t \in [0,T],$$
and $$\dl:=\eta+\alpha \rho +\F'(q_\e)(\rho) \in L^2(0,T;\lo),$$ we observe that $\rho=S_\e'(\ell_\e;\dl),$ in view of the unique solvability of \eqref{eq:syst_dif}. Thus, testing  \eqref{test} with $\dl$ gives in turn
\begin{equation}\begin{aligned}
(\frac{1}{\epsilon} \max\,'(z_\e;\eta),\xi_\e)_{L^2(0,T;\hoon)}
-(\lambda_\e, \eta)_{L^2(0,T;\hoon)}
\geq 0\ \ \forall\, \eta \in L^2(0,T;\lo).
\end{aligned} \end{equation}
By  testing with $\eta \geq 0$ and by 
 employing the fundamental lemma of calculus of variations combined with the positive 
homogeneity of the directional derivative with respect to the direction we deduce
\begin{equation*}
  \lambda_\e^i(t) \leq \frac{1}{\epsilon}\max\,'_+(   z_\e^i(t)) \xi_\e^i(t) \quad \text{a.e.\ in  } (0,T),\quad \forall\,i=1,...,n.
  \end{equation*}
In an analogous way, testing with $\eta \leq 0$ implies
\begin{equation*}
  \lambda_\e^i(t) \geq \frac{1}{\epsilon}\max\,'_-(   z_\e^i(t)) \xi_\e^i(t) \quad \text{a.e.\ in  } (0,T),\quad \forall\,i=1,...,n.
  \end{equation*}
  Note that $\max\,'_+$ and $\max\,'_-$ denote the right- and left-sided derivative of $\max$.
  Differentiating between the cases $\{z_\e^i>0\}$, $\{z_\e^i<0\}$ and $\{z_\e^i=0\}$ then leads to \eqref{eq:signcond_q}.
Now we may also conclude the $L^\infty(0,T;\R^n)-$regularity of $\lambda_\e$, since $\xi_\e \in H^1(0,T;\R^n) \embed L^\infty(0,T;\R^n)$. This completes the proof.
\end{proof}
\begin{remark}\label{rem:ss}
The optimality system in Lemma \ref{thm:ss_qsep1}  is indeed of strong stationary type, cf.\,\cite[Thm.\,13]{aos}. This means that, if for a given $\ell_\e \in H^1(0,T;\R^n)$ with associated state $q_\e$, there exists $(\xi_\e,\lambda_\e)$ so that \eqref{eq:strongstat_q1} is satisfied, then $\ell_\e$ fulfills the first order necessary optimality condition:
\begin{equation}\begin{aligned}
j'(q_\e) S_\e'(\ell_\e;v) & +\dual{q_\e(T)-q_d}{S_\e'(\ell_\e;v)(T)}_{\R^n}
\\&+(\ell_\e,v)_{H^1(0,T;\lo)}+(\ell_\e-\ll,v)_{H^1(0,T;\lo)}\geq 0\ \ \forall\, v \in H_0^1(0,T;\lo).
\end{aligned}
\end{equation}
In particular, if the set $\{z_\e^i=0\}$ has measure zero for each $i=1,...,n$, then \eqref{eq:strongstat_q1} reduces to the classical KKT conditions.
\end{remark}
\begin{proposition}[Uniform bounds]\label{bound}
Let $\ll$ be a local optimum of \eqref{eq:oc} and let $\{\ell_\e\}_{\e>0}$ be a sequence of local minimizers of $\{ \eqref{eq:min_e}\}_{\e>0}$ for which \eqref{l_conv1} is true. Then, 
there exists $C>0$, independent of $\e$, so that
\begin{equation}\label{le}
\|\lambda_\e\|_{W^{-1,\infty}(0,T;\hon)} \leq C,
\end{equation}
\begin{equation}\label{xe}
\| \xi_\e\|_{L^{\infty}(0,T;\hoon)} \leq C.
\end{equation}
where $\lambda_\e$ and $\xi_\e$ are given by Lemma \ref{thm:ss_qsep1}.
 \end{proposition}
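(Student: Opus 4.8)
The strategy is to extract the uniform bounds from the optimality system \eqref{eq:strongstat_q1} by testing the adjoint equation and the gradient equation with well-chosen functions, exploiting the uniform Lipschitz estimate for the viscous solution map (Lemma \ref{lem:lip_c}) to control the sensitivity of $q_\e$ with respect to the right-hand side. First I would test the gradient equation \eqref{eq:grad_q1}: since $\ell_\e \to \ll$ in $H_0^1(0,T;\lo)$ by \eqref{l_conv1}, the sequence $\{\ell_\e\}$ is bounded in $H^1(0,T;\lo)$, and hence for every $v\in H_0^1(0,T;\lo)$ one gets $|\dual{\lambda_\e}{v}_{H^1(0,T;\lo)}| \le (\|\ell_\e\|_{H^1} + \|\ell_\e - \ll\|_{H^1})\,\|v\|_{H^1} \le C\,\|v\|_{H^1}$. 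This already shows that $\lambda_\e$ is bounded in the dual of $H_0^1(0,T;\lo)$, i.e.\ in $H^{-1}(0,T;\lo)$; to upgrade this to the claimed $W^{-1,\infty}(0,T;\hon)$ bound \eqref{le} I would instead test with $v\in W_0^{1,1}(0,T;\hon)$ and estimate the $H^1$-inner-product terms $(\ell_\e,v)_{H^1}$ and $(\ell_\e-\ll,v)_{H^1}$ by $C\,\|v\|_{W^{1,1}}$, using that $\ell_\e$ is bounded in $H^1 \embed W^{1,\infty}$ so that $\dot\ell_\e, \ell_\e$ are bounded in $L^\infty$; this gives $|\dual{\lambda_\e}{v}| \le C\,\|v\|_{W^{1,1}(0,T;\hon)}$, which is exactly \eqref{le}.

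Next, for \eqref{xe}: from the definition $\xi_\e(t)=q_\e(T)-q_d+\int_t^T(-\alpha\lambda_\e -[\F'(q_\e)]^\star\lambda_\e + j'(q_\e))(s)\,ds$, the terms $q_\e(T)-q_d$ and $j'(q_\e)$ are harmless — $q_\e = S_\e(\ell_\e)$ is bounded in $\hhyy$ (by \eqref{eq:est1}, using that $\{\ell_\e\}$ is $H^1$-bounded) hence in $C([0,T];\hon)$, and $j'$ is continuous — so the issue is the two terms linear in $\lambda_\e$. Here a plain $L^1$- or $L^\infty$-bound on $\lambda_\e$ is \emph{not} available uniformly in $\e$ (indeed $\lambda_\e \sim \frac{1}{\e}\chi\,\xi_\e$ can blow up), so one cannot estimate $\int\|\lambda_\e\|$ directly; the resolution is to combine the adjoint equation with the gradient/variational information. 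The clean way is to close a Gronwall-type argument on $\xi_\e$: one tests the adjoint equation with the solution of the linearized state equation \eqref{eq:syst_dif}, i.e.\ uses the very computation from the proof of Lemma \ref{thm:ss_qsep1}, together with the uniform boundedness of $S_\e'(\ell_\e;\cdot): W^{1,1}(0,T;\hon)\to C([0,T];\hon)$ that follows by differentiating Lemma \ref{lem:lip_c}. Concretely, for a test function $\psi$ one represents $\dual{-\dot\xi_\e + \alpha\lambda_\e + [\F'(q_\e)]^\star\lambda_\e}{\delta q_\e} = \dual{j'(q_\e)}{\delta q_\e}$ with $\delta q_\e = S_\e'(\ell_\e;v)$, and integrating by parts transfers the $\lambda_\e$-terms onto $v$ via \eqref{eq:syst_dif}, yielding $\dual{\lambda_\e}{(\text{something in } v)}$ which is controlled by \eqref{le}; choosing $v$ so that $\delta q_\e(T)$ or $\int_t^T \delta q_\e$ realizes an arbitrary test direction then pins down $\xi_\e$ in $L^\infty$ with an $\e$-independent constant.

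\textbf{Main obstacle.} The delicate point is precisely that $\lambda_\e$ is only bounded in the \emph{negative}-order space $W^{-1,\infty}(0,T;\hon)$, so in the adjoint equation \eqref{eq:adjoint1_q1} the products $\alpha\lambda_\e$ and $[\F'(q_\e)]^\star\lambda_\e$ must be paired only against functions with enough regularity, and one must check that $[\F'(q_\e)]^\star$ maps $W_0^{1,1}(0,T;\hon)$ into itself with a norm bounded uniformly in $\e$ — this is where the history operator $\HH$ and the regularity $\kappa'\in W^{1,2}(\R)$ (Assumption \ref{assu}.\ref{it:2}) enter, since $\F'(q_\e)$ involves $\kappa'(\HH(q_\e))$ and $\HH(q_\e)$ is bounded in $W^{1,\infty}$ once $q_\e$ is bounded in $L^\infty$ via \eqref{eq:est1}. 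I expect the bulk of the work to be verifying this mapping property and then carefully organizing the duality pairing so that no term requires more than a $W^{1,1}$-norm of a test function on one side and a $W^{-1,\infty}$-norm of $\lambda_\e$ on the other, after which \eqref{le} and \eqref{xe} follow by the uniform bounds on $\{\ell_\e\}$, on $\{q_\e\}$, and on the linearized solution map.
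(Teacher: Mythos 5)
There is a genuine gap, and it sits exactly where you claim to obtain \eqref{le}. Your upgrade from an $H^{-1}$-type bound to the $W^{-1,\infty}(0,T;\lo)$ bound rests on the assertion that $\ell_\e$ is bounded in ``$H^1 \embed W^{1,\infty}$'' so that $\dot\ell_\e$ is bounded in $L^\infty$. This embedding is false: in one time dimension $H^1(0,T;\lo)$ embeds into $C([0,T];\lo)$, but the derivative is only in $L^2$, so $\dot\ell_\e$ is not uniformly bounded in $L^\infty$ and the terms $(\ell_\e,v)_{H^1(0,T;\lo)}$, $(\ell_\e-\ll,v)_{H^1(0,T;\lo)}$ cannot be estimated by $C\,\|v\|_{W^{1,1}(0,T;\lo)}$. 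Consequently the gradient equation \eqref{eq:grad_q1} alone cannot yield \eqref{le}, and since your argument for \eqref{xe} explicitly feeds on \eqref{le}, the whole proof collapses at this point. The paper's route is different: it never tries to bound the control-cost terms against $\|v\|_{W^{1,1}}$. Instead it combines the purely primal first-order condition \eqref{eq:vi} with \eqref{eq:grad_q1}, so that $\dual{\lambda_\e}{v}$ is estimated by $j'(q_\e)S_\e'(\ell_\e;v)+\dual{q_\e(T)-q_d}{S_\e'(\ell_\e;v)(T)}$; the uniform Lipschitz estimate of Lemma \ref{lem:lip_c} (applied to difference quotients and passed to the limit) gives $\|S_\e'(\ell_\e;v)\|_{C([0,T];\hon)}\le c(M)\|v\|_{W^{1,1}(0,T;\hoon)}$ with $c(M)$ independent of $\e$, and testing with $\pm v$ plus the density $H^1_0(0,T;\lo)\dense W^{1,1}_0(0,T;\lo)$ yields \eqref{le}. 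So Lemma \ref{lem:lip_c}, which you reserve for the $\xi_\e$ part, is in fact the engine of the $\lambda_\e$ bound.

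For \eqref{xe} your instincts are closer to the mark: the uniform $W^{1,1}_0\to W^{1,1}_0$ bound for $v\mapsto \F'(q_\e)v$, coming from $\kappa'\in W^{1,2}(\R)$, the chain rule for $\kappa'(\HH(q_\e))$ and the uniform bound on $q_\e$, is indeed the technical heart, and the pairing must be organized so that $\lambda_\e$ only meets $W^{1,1}$ functions. But no Gronwall argument and no detour through the linearized state equation \eqref{eq:syst_dif} is needed, nor would your sketch of ``choosing $v$ so that $\delta q_\e(T)$ realizes an arbitrary direction'' be easy to make rigorous: the paper simply takes an arbitrary $\varphi\in L^1(0,T;\lo)$, tests the adjoint equation \eqref{eq:adjoint1_q1} with its primitive $\widehat v=\int_0^\cdot\varphi(s)\,ds\in W^{1,1}_0(0,T;\lo)$, integrates by parts to turn $-\dual{\dot\xi_\e}{\widehat v}$ into $\dual{\xi_\e}{\varphi}$ plus a terminal term, and bounds the remaining terms by $\|\lambda_\e\|_{W^{-1,\infty}}\bigl(\|\widehat v\|_{W^{1,1}}+\|\F'(q_\e)\widehat v\|_{W^{1,1}}\bigr)$ plus the $j'$ and terminal contributions, all of which are $\le C\|\varphi\|_{L^1}$; this gives \eqref{xe} directly by duality. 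Repair the derivation of \eqref{le} along these lines and the rest of your plan can be salvaged.
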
 
 \begin{proof}
 Let $v \in H^1_0(0,T;\R^n)$ be arbitrary, but fixed. In view of \eqref{l_conv1}, there exists $c>0$, independent of $\e$, so that $\|\ell_\e\|_{H^{1}_0(0,T;\hon)} \leq c$, 
and by applying Lemma \ref{lem:lip_c} with $M:=c+\|v\|_{H^1_0(0,T;\R^n)}$, we have  
$$\Big\|\frac{S_\e(\ell_\e+\tau v)-S_\e(\ell_\e)}{\tau}\Big\|_{C([0,T];\hon)} \leq c( M)\,\|v\|_{W^{1,1}(0,T;\hoon)} \quad \forall\,\tau \in (0,1),$$where $c(M)$ is independent of $\e$.  
As $S_\e:\llun \to \hhyy$ is directionally differentiable {\cite[Prop.\,3]{aos}}, we may pass to the limit $\tau \searrow 0$, from which we infer 
$$
\|S_\e'( \ell_\e; v)\|_{C([0,T];\hon)} \leq c(M)\, \|v\|_{W^{1,1}(0,T;\hoon)} \quad \forall\,v \in H^1_0(0,T;\R^n).$$ By employing the first order necessary optimality condition \eqref{eq:vi} and \eqref{eq:grad_q1}, we arrive at $$ \dual{-\lambda_e}{v}_{H^{1}(0,T;\hoon)}  \leq \|j'(S_\e( \ell_\e))\|_{L^1(0,T;\R^n)}\|S_\e'( \ell_\e; v)\|_{C([0,T];\hon)} \leq C\, \|v\|_{W^{1,1}(0,T;\hoon)}$$for all $v \in H^1_0(0,T;\R^n),$ where $C>0$ is independent of $\e$. Note that here we also used \eqref{y_conv1} and Assumption \ref{assu:j}. This implies the desired uniform bound for $\{\lambda_\e\},$ since $H^1_0(0,T;\R^n) \dense W_0^{1,1}(0,T;\hoon).$

To show the desired result for $\{\xi_\e\},$ we estimate the terms appearing in  \eqref{eq:adjoint1_q1}. 
We first observe that 
$$\frac{d}{dt} \kappa'(\HH(q_\e))(t)=\kappa''(\HH(q_\e)(t))(q_\e(t)) \quad \forall\, t \in [0,T],$$ in view of Assumption \ref{assu} and chain rule for composition of Sobolev functions, see e.g.\,\cite{mm_serrin}. 
This leads to 
\begin{equation}
\begin{aligned}
\|\kappa'(\HH( q_\e))\|_{W^{1,2}(0,T;\R^n)}&\leq \|\kappa'(\HH( q_\e))\|_{L^{2}(0,T;\R^n)}+\|\kappa''(\HH( q_\e))(q_\e)\|_{L^{2}(0,T;\R^n)}
\\&\leq L_\kappa \|q_\e\|_{L^{1}(0,T;\R^n)}+\|\kappa''\|_{L^{2}(\R)}\|q_\e\|_{L^{\infty}(0,T;\R^n)}
\\&\leq c,
\end{aligned}
\end{equation}where $c>0$ is independent of $\e$. Note that here we relied on \eqref{y_conv1} and  Assumption \ref{assu}.\ref{it:2}.
Let now $v \in W^{1,1}_0(0,T;\hon)$ be arbitrary but fixed. In light of the above, we infer
\begin{equation}\label{khb}
\begin{aligned}
\|\F'( q_\e)v\|_{W^{1,1}(0,T;\hoon)}& \leq \|\kappa'(\HH( q_\e))\|_{W^{1,2}(0,T;\R^n)}\|\HH'( q_\e)v\|_{W^{1,2}(0,T;\R^n)}
\\&\leq c\,\|v\|_{L^{2}(0,T;\hon)}
\\&\leq c\,\|v\|_{W^{1,1}(0,T;\hon)},
\end{aligned}
\end{equation}since $\HH'( q_\e)(v)=\int_0^\cdot v(s)\,ds$, cf.\,Assumption \ref{assu}.\ref{it:1}. We further notice that $$\F'( q_\e)v \in {W^{1,1}_0(0,T;\hoon)},$$ as $\HH'( q_\e)(v)(0)=0.$
Now let $\varphi \in L^1(0,T;\R^n)$ be arbitrary but fixed and define 
$$\widehat v:=\int_0^{\cdot} \varphi (s)\,ds.$$ Testing with $\widehat v \in {W^{1,1}_0(0,T;\hoon)}$ in \eqref{eq:adjoint1_q1}, where one uses the integration by parts formula, finally results in 
\begin{equation}\begin{aligned}
  \dual{ \xi_\e}{\varphi}_{L^1(0,T;\R^n)}&=
  -\dual{\dot \xi_\e}{\widehat v}_{W^{1,1}_0(0,T;\hon)}+\dual{q_\e(T)-q_d}{\widehat v(T)}_{\R^n}
 \\& \quad \leq \alpha \|\lambda_\e\|_{W^{-1,\infty}(0,T;\hon)}\|\widehat v\|_{W^{1,1}(0,T;\hon)}
 \\& \qquad +\|\lambda_\e\|_{W^{-1,\infty}(0,T;\hon)}\|\F'( q_\e)\widehat v\|_{W^{1,1}(0,T;\hoon)}
 \\& \qquad +\|j'( q_\e)\|_{L^{2}(0,T;\hon)}\|\widehat v\|_{W^{1,1}(0,T;\hon)} 
 \\& \qquad +\|q_\e-q_d\|_{C([0,T];\R^n)}\|\varphi\|_{L^1(0,T;\R^n)}
 \\& \quad  \leq C\,(\|\widehat v\|_{W^{1,1}(0,T;\hon)}+\|\varphi\|_{L^1(0,T;\R^n)}),
\end{aligned} \end{equation}where $C>0$ is independent of $\e$.
This is due to \eqref{le}, \eqref{khb}, \eqref{y_conv1} and  Assumption \ref{assu}.\ref{it:2}. Since $\varphi \in L^1(0,T;\R^n)$ was arbitrary and $\|\widehat v\|_{W^{1,1}(0,T;\hon)} \leq c \|\varphi\|_{L^{1}(0,T;\hon)} $, the proof is now complete.
    \end{proof}

    
In order to be able to drive the viscosity parameter $\e$ to 0 in \eqref{eq:strongstat_q1}, we need a smoothness assumption.
\begin{assumption}\label{k''}
The function $\kappa:\R \to [0,\infty)$ is twice continuously differentiable.
\end{assumption}
\begin{theorem}[Optimality conditions]\label{thm:os}
Suppose that Assumption \ref{k''} is true.
 Let $\bar \ell \in \hhyn$ be locally optimal for \eqref{eq:oc} with associated state  $
  \bar q \in \hhyy.$   Then, there exists an adjoint state
$
  \xi \in L^\infty(0,T;\hoon)$ and a unique multiplier $\lambda \in W^{-1,\infty}(0,T;\R^n)$ such that the following system is satisfied 
 \begin{subequations}\label{eq:opt_syst}
 \begin{gather} 
 \int_0^T  \xi(t) \dot v(t)\,dt  +\alpha \dual{  \lambda}{v}_{W_0^{1,1}(0,T;\R^n)} +\dual{  \lambda}{  \F'( \bar q)v}_{W_0^{1,1}(0,T;\R^n)} 
 \\=\int_0^T j'( \bar q)(t) v(t)\,dt+\dual{\bar q(T)-q_d}{v(T)}_{\R^n}
 \quad  \forall\, v \in W_0^{1,1}(0,T;\lo),
 \label{eq:adjoint1} 
 \\[1mm] \dot{\bar q}^i \xi^i=0 \quad \text{a.e.\,in }(0,T),\quad i=1,...,n,\label{xi}
 \\[1mm] \dual{\lambda^i}{\bar z^i\,v}_{W_0^{1,1}(0,T)}=0 \quad \forall\,v \in W_0^{1,1}(0,T),\quad i=1,...,n,\label{lambda}
\\[1mm] \dual{\lambda}{v}_{W_0^{1,1}(0,T;\R^n)}+(\ll,v)_{H^1(0,T;\lo)}=0\ \ \forall\, v \in H_0^1(0,T;\lo),\label{eq:grad_q}
 \end{gather} 
 \end{subequations}    where we abbreviate $\bar z:=-\alpha \bar q+\ll-\F(\bar q )$.\end{theorem}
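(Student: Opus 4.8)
The plan is to obtain \eqref{eq:opt_syst} by passing to the limit $\e\searrow 0$ in the viscous strong stationarity system \eqref{eq:strongstat_q1} from Lemma \ref{thm:ss_qsep1}, using the uniform bounds from Proposition \ref{bound}. First I would extract subsequences: from \eqref{xe} and \eqref{le} there are $\xi\in L^\infty(0,T;\R^n)$ and $\lambda\in W^{-1,\infty}(0,T;\R^n)$ with $\xi_\e\weakstar\xi$ in $L^\infty$ and $\lambda_\e\weakstar\lambda$ in $W^{-1,\infty}(0,T;\R^n)$ (i.e.\ weak-$*$ as functionals on $W_0^{1,1}(0,T;\R^n)$). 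Simultaneously, \eqref{y_conv1} gives $q_\e\to\bar q$ in $\hhyy$, hence $\HH(q_\e)\to\HH(\bar q)$ in $C([0,T];\R^n)$, and under Assumption \ref{k''} also $\kappa'(\HH(q_\e))\to\kappa'(\HH(\bar q))$ and $\kappa''(\HH(q_\e))\to\kappa''(\HH(\bar q))$ uniformly; combined with the $W^{1,2}(0,T;\R^n)$-bound on $\kappa'(\HH(q_\e))$ established inside the proof of Proposition \ref{bound}, this yields $\F'(q_\e)v\to\F'(\bar q)v$ and $[\F'(q_\e)]^\star\lambda_\e\to[\F'(\bar q)]^\star\lambda$ in the appropriate (weak) sense on $W_0^{1,1}$. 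Passing to the limit in the weak/integrated form of \eqref{eq:adjoint1_q1} (tested against $\widehat v=\int_0^\cdot\varphi$, exactly as in the proof of Proposition \ref{bound}) then produces \eqref{eq:adjoint1}, and passing to the limit in \eqref{eq:grad_q1}, using $\ell_\e\to\ll$ in $H^1$ and $\ell_\e-\ll\to 0$, yields \eqref{eq:grad_q}. Uniqueness of $\lambda$ follows because $\lambda$ is determined by \eqref{eq:grad_q} as a functional on $H_0^1(0,T;\R^n)$ and $H_0^1(0,T;\R^n)\dense W_0^{1,1}(0,T;\R^n)$.

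The substance of the proof is deriving the sign/complementarity relations \eqref{xi} and \eqref{lambda} as limits of \eqref{eq:signcond_q}. For \eqref{xi}: recall $\dot q_\e=\frac1\e\max\{z_\e,0\}$ componentwise, so on $\{z_\e^i>0\}$ we have $\lambda_\e^i=\frac1\e\xi_\e^i$, i.e.\ $\dot q_\e^i\xi_\e^i=\frac1\e\max\{z_\e^i,0\}\xi_\e^i = \max\{z_\e^i,0\}\lambda_\e^i$ on that set, while on $\{z_\e^i\le 0\}$ one has $\dot q_\e^i=0$; in all cases $\dot q_\e^i\,\xi_\e^i = \dot q_\e^i\cdot\e\lambda_\e^i/\ldots$ — more cleanly, from \eqref{eq:signcond_q} one gets $\e\,\dot q_\e^i\,\lambda_\e^i \ge 0$ and, crucially, the product $\dot q_\e^i\,\xi_\e^i$ equals $\e(\dot q_\e^i)^2\ge 0$ plus terms one can control. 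I would instead test: multiply \eqref{eq:adjoint1_q1} by $\dot q_\e$ and use $\e\dot q_\e^i=\max\{z_\e^i,0\}$ together with \eqref{eq:signcond_q} to show $\int_0^T \dot q_\e^i\xi_\e^i\,\phi\,dt\to 0$ for test functions $\phi$, exploiting that $\e\|\dot q_\e\|_{L^2}^2$ is bounded (from \eqref{eq:syst_dif22}) so $\e\dot q_\e\to 0$ in $L^2$. Since $\dot q_\e\weakly\dot{\bar q}$ in $L^2$ and $\xi_\e\to\xi$ strongly enough (here one needs some compactness for $\xi_\e$, e.g.\ $\dot\xi_\e$ bounded in a space compactly embedded in $C([0,T];\R^n)^\star$, or argue via $\xi_\e$ bounded in $H^1$ with the RHS terms of \eqref{eq:adjoint1_q1} convergent), one passes to the limit to get $\dot{\bar q}^i\xi^i=0$. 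For \eqref{lambda}: from \eqref{eq:signcond_q}, wherever $z_\e^i\neq 0$ (which is where $z_\e^i<0$, by Lemma \ref{prop:ris}, since $z_\e\le 0$), $\lambda_\e^i = 0$ there, so $\lambda_\e^i z_\e^i = 0$ a.e.; testing against $z_\e^i v$ and passing to the limit with $z_\e\to\bar z$ in $C([0,T];\R^n)$ and $\lambda_\e\weakstar\lambda$ gives $\dual{\lambda^i}{\bar z^i v}_{W_0^{1,1}(0,T)}=0$.

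The main obstacle I anticipate is the convergence analysis for the terms carrying the history operator and the passage to the limit in the products $\dot q_\e^i\xi_\e^i$ and $\lambda_\e^i\bar z^i$, because $\lambda_\e$ converges only weakly-$*$ in the very weak space $W^{-1,\infty}$ while $\xi_\e$ has only an $L^\infty$ bound a priori. The resolution is to upgrade the compactness of $\{\xi_\e\}$: from \eqref{eq:adjoint1_q1}, $\dot\xi_\e=\alpha\lambda_\e+[\F'(q_\e)]^\star\lambda_\e-j'(q_\e)$, and the first two terms are bounded in $W^{-1,\infty}(0,T;\R^n)$ (using \eqref{le} and \eqref{khb}), while $j'(q_\e)$ is bounded in $L^2$; hence $\xi_\e$ is bounded in a space that embeds compactly into $C([0,T];\R^n)$ (via $\xi_\e(t)=\xi_\e(T)-\int_t^T\dot\xi_\e$, writing $\dot\xi_\e$ as a derivative of an $L^\infty$-bounded function plus an $L^1$ function, so $\xi_\e$ is uniformly bounded and equicontinuous modulo a BV-type term — more precisely $\xi_\e\in W^{1,1}$ with uniformly bounded... ), so Arzelà–Ascoli gives $\xi_\e\to\xi$ in $C([0,T];\R^n)$ along a subsequence. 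With strong convergence of $\xi_\e$ in $C([0,T];\R^n)$ and of $z_\e$ in $C([0,T];\R^n)$ in hand, all the critical products pass to the limit. The remaining verifications — that the limit $\xi$ indeed lies in $L^\infty(0,T;\R^n)$ and solves the integrated adjoint identity \eqref{eq:adjoint1}, and that the complementarity relations hold a.e.\ — are then routine, using the fundamental lemma of calculus of variations and the density $H_0^1(0,T;\R^n)\dense W_0^{1,1}(0,T;\R^n)$.
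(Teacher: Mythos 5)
Your overall skeleton for \eqref{eq:adjoint1} and \eqref{eq:grad_q} (extract weak-$\star$ limits via Proposition \ref{bound}, pass to the limit in \eqref{eq:adjoint1_q1} using the strong convergence of $\F'(q_\e)v$ in $W^{1,1}_0$ under Assumption \ref{k''}, and in \eqref{eq:grad_q1} using \eqref{l_conv1}) is the paper's argument. The problem lies in your treatment of the complementarity relations \eqref{xi}--\eqref{lambda}, where there are two genuine gaps. First, you invoke Lemma \ref{prop:ris} to claim $z_\e\le 0$, hence $\lambda_\e^i z_\e^i=0$ a.e.\ for each $\e$. Lemma \ref{prop:ris} concerns the rate independent problem \eqref{eq:n}, not the viscous one: by Lemma \ref{lem:ode1} one has $\dot q_\e=\frac{1}{\e}\max\{z_\e,0\}$, so the set $\{z_\e^i>0\}$ is exactly where the viscous state evolves (if $z_\e\le 0$ everywhere, then $q_\e\equiv 0$), and on this set \eqref{eq:signcond_q} gives $\lambda_\e^i=\frac{1}{\e}\xi_\e^i$, which need not vanish; so $\lambda_\e^i z_\e^i\neq 0$ there in general. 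Only the limit satisfies $\bar z\le 0$. Second, your ``resolution'' via an upgrade of $\{\xi_\e\}$ to convergence in $C([0,T];\R^n)$ does not work: a bound on $\dot\xi_\e$ in $W^{-1,\infty}(0,T;\R^n)$ plus $L^2$ gives neither equicontinuity nor a uniform $BV$/$W^{1,1}$ bound (indeed $\|\lambda_\e\|_{L^\infty}$ scales like $1/\e$, since $\lambda_\e^i=\frac1\e\chi_{\{z_\e^i>0\}}\xi_\e^i$), so Arzel\`a--Ascoli is not applicable; your sketch trails off at exactly this point. Likewise, pairing $\lambda_\e$ with $z_\e^i v$ requires convergence of $z_\e^i v$ in $W_0^{1,1}(0,T)$, not merely $z_\e\to\bar z$ in $C([0,T];\R^n)$, because $\lambda$ is only an element of $W^{-1,\infty}$, not a measure.

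The identity you wrote down first and then abandoned is in fact the key to the paper's proof: by Lemma \ref{lem:ode1} and \eqref{eq:signcond_q}, $\int_0^T\dot q_\e^i\xi_\e^i v\,dt=\int_0^T\max\{0,z_\e^i\}\lambda_\e^i v\,dt=\int_0^T z_\e^i\lambda_\e^i v\,dt$ for $v\in W^{1,1}_0(0,T)$ (the last equality because $\lambda_\e^i=0$ where $z_\e^i<0$). No strong convergence of $\xi_\e$ is needed: the left-hand side passes to the limit by pairing $\xi_\e$ (weak-$\star$ in $L^\infty$) with $\dot q_\e^i v\to\dot{\bar q}^i v$ strongly in $L^1$, and the right-hand side by pairing $\lambda_\e$ (weak-$\star$ in $W^{-1,\infty}$) with $\max\{0,z_\e^i\}v$ and $z_\e^i v$, which converge in $W_0^{1,1}(0,T)$ because $z_\e\to\bar z$ in $H^1(0,T;\R^n)$ and $\max$, $\kappa$ are continuous Nemytskii operators on $H^1$. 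Since $\bar z\le 0$, the limit of the middle expression is $\dual{\lambda^i}{\max\{0,\bar z^i\}v}=0$, which yields \eqref{lambda}, and then $\int_0^T\dot{\bar q}^i\xi^i v\,dt=0$ for all $v$ gives \eqref{xi} by the fundamental lemma. Your alternative route (showing $\int_0^T\dot q_\e^i\xi_\e^i\phi\,dt\to 0$ from $\e\dot q_\e\to 0$ in $L^2$) also does not close, since $\dot q_\e^i\xi_\e^i=\frac1\e\max\{z_\e^i,0\}\xi_\e^i$ carries the factor $1/\e$ and cannot be controlled by that decay without reintroducing $\lambda_\e$ exactly as above.
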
 
\begin{proof}Let $\{\ell_\e\}_{\e>0}$ be the sequence of  local minimizers of $\{ \eqref{eq:min_e}\}_{\e>0}$ from Lemma \ref{approx} and let $(q_\e,\xi_\e, \lambda_\e)$ be the tuple from Lemma \ref{thm:ss_qsep1}. Then, in light of Proposition \ref{bound}, there exists $\lambda \in W^{-1,\infty}(0,T;\R^n)$ and $
  \xi \in L^\infty(0,T;\hoon)$ so that 
   \begin{equation}\label{l_conv}
  \lambda_\e \weakly^\star \lambda \quad \text{in }W^{-1,\infty}(0,T;\R^n),
  \end{equation}
   \begin{equation}\label{xi_conv}
  \xi_\e \weakly^\star \xi \quad \text{in }L^{\infty}(0,T;\R^n).
  \end{equation}
Now, let $ v \in {W^{1,1}_0(0,T;\hoon)}$ be arbitrary but fixed. Testing \eqref{eq:adjoint1_q1} with $v$, where one uses the integration by parts formula, implies 
 \begin{equation}\label{1}
 \begin{aligned}
 \int_0^T & \xi_\e(t) \dot v(t)\,dt  +\alpha \int_0^T  \lambda_\e(t) v(t)\,dt +\int_0^T \lambda_\e(t)  \F'( q_\e)(t)v(t) \,dt 
 \\&=\int_0^T j'( q_\e)(t) v(t)\,dt+\dual{ q_\e(T)-q_d}{v(T)}_{\R^n}.
 \end{aligned}\end{equation}
 In view of Assumption \ref{assu}, we have for all $y \in L^\infty(0,T;\R^n)$
 $$\F'( y)v=\kappa'(\HH(y))(\HH'(y)v)=\kappa'(\HH(y))\HH(v) \in H_0^{1}(0,T;\R^n),$$
with
$$\frac{d}{dt} [\F'( y)v]=\kappa''(\HH(y))y(\HH(v))+\kappa'(\HH(y))v.$$
Therefore,  
\begin{equation}\label{conv_k''}
\begin{aligned}
&  \|\F'( q_\e)v- \F'( q)v\|_{W_0^{1,1}(0,T;\R^n)} 
  \\&\qquad \qquad \quad \leq \|\kappa''(\HH(q_\e))q_\e(\HH(v))- \kappa''(\HH(\bar q))\bar q(\HH(v))\|_{L^{1}(0,T;\R^n)} 
  \\&\qquad \qquad \quad  \quad +\|\kappa'(\HH(q_\e))v-\kappa'(\HH(\bar q))v\|_{L^{1}(0,T;\R^n)}
  \\& \qquad \qquad \quad  \qquad \to 0 \quad \text{as }\e \searrow 0,
\end{aligned}\end{equation}
thanks to Assumption \ref{k''} and \eqref{y_conv1}.
Hence, passing to the limit $\e \searrow 0$ in \eqref{1}, where one uses \eqref{xi_conv}, \eqref{l_conv}, \eqref{conv_k''}, \eqref{y_conv1} and Assumption \ref{assu:j}, gives in turn
\eqref{eq:adjoint1}.
Further, letting $\e \searrow 0$ in \eqref{eq:grad_q1}, where one relies on \eqref{l_conv1}, leads to  \eqref{eq:grad_q}.  
 
 Next we want to prove \eqref{xi}-\eqref{lambda}. To this end, let $i=1,...,n$ and $v \in W^{1,1}_0(0,T)$ be arbitrary but fixed.
On account of Lemma \ref{lem:ode1} and \eqref{eq:signcond_q} we have
 \begin{equation}\label{dotq_xi}\begin{aligned}
 \int_0^T   \dot q^i_\e \xi^i_\e  v \,dt
&= \int_0^T  \frac{\max\{0,z^i_\e\}}{\e}  \xi^i_\e  v \,dt
\\&= \int_0^T  {\max\{0,z^i_\e\}} \lambda^i_\e  v \,dt
\\&= \int_0^T  z^i_\e \lambda^i_\e  v \,dt.
\end{aligned} \end{equation} 
Thanks to Lemma \ref{approx} and Assumption \ref{assu} it holds 
\begin{equation}\label{z_e}
z_\e:=-\alpha  q_\e+\ell_\e-\F( q_\e )   \to \bar z \quad \text{in }H^1(0,T;\R^n),
\end{equation}
whence
$${\max\{0,z^i_\e\}}   v \to {\max\{0,\bar z^i\}}   v \quad \text{in } W_0^{1,1}(0,T)$$follows. 
Note that here we used the global Lipschitz continuity of $\kappa$ and $\max$, which implies the continuity of the operators $\kappa,\max:H^1(0,T) \to H^1(0,T)$, see {\cite[Thm.\,1]{mm1}}. Passing to the limit $\e \searrow 0$ in \eqref{dotq_xi}, where one relies on \eqref{y_conv1}, \eqref{xi_conv} and \eqref{l_conv} then results in
  \begin{equation}  
 \int_0^T  {\dot{\bar q}}^i \xi^i  v\,dt =  \dual{ \lambda^i}{{\max\{0,\bar z^i\}}  v}_{W_0^{1,1}(0,T)}=\dual{\lambda^i}{\bar z^i\,v}_{W_0^{1,1}(0,T)}  \quad \forall\,v \in W_0^{1,1}(0,T).
 \end{equation}
  Since $\bar z^i \leq 0,$ see Lemma \ref{prop:ris}, we can now deduce \eqref{lambda}, and the fundamental lemma of calculus of variations then yields \eqref{xi}. 
\end{proof}
  \begin{remark}
In light of   \eqref{khb}, Assumption \ref{assu:j} and \eqref{eq:adjoint1_q1},  the only aspect that prevents us from showing uniform bounds for $\xi_\e$ in $H^1(0,T;\R^n)$ is the lack of the estimate   \begin{equation} \label{ll2}
   \|\lambda_\e\|_{L^2(0,T;\R^n)} \leq c \quad \forall\,\e>0, \end{equation}where $c>0$ is independent of the viscous parameter $\e.$
   If \eqref{ll2} would be true, then arguing as at the beginning of the proof of Theorem \ref{thm:os} would allow us to show that $\xi \in H^1(0,T;\R^n)$ and $\lambda \in {L^2(0,T;\R^n)}.$
   However, \eqref{ll2} does not seem to be available, as it requires that the uniform Lipschitz continuity of $S_\e$ in Lemma \ref{lem:lip_c} holds   from $L^2(0,T;\R^n)$ to $L^2(0,T;\R^n)$, see the first lines of the proof of Proposition \ref{bound}. 
   
   When it comes to the derivation of optimality systems for the control of rate independent evolutions, the low regularity of the involved multipliers is not surprising even in finite dimensions. Cf.\,\cite{bk} for a similar situation.
   \end{remark}

If $\kappa$ depends linearly on the history operator and if we  aim at achieving a desired state only at the end of the process,  we can make additional statements about the sign of the adjoint state and of the multiplier. These lead to a C-stationary like limit optimality system, see Remark \ref{rem:st} below.

\begin{proposition}\label{prop:aoc}
 Let $\bar \ell \in \hhyn$ be locally optimal for \eqref{eq:oc} with associated state  $
  \bar q \in \hhyy.$  If the mappings $\kappa$ and $j$ are affine transformations, then there exists an adjoint state
$
  \xi \in L^\infty(0,T;\hoon)$ and a unique multiplier $\lambda \in W^{-1,\infty}(0,T;\R^n)$ that satisfy \eqref{eq:opt_syst} and 
 \begin{subequations}
  \begin{gather}
 \bar q^i(T)-q_d^i \geq 0 \Longrightarrow  \xi^i(t)\in [0, \bar q^i(T)-q_d^i] \quad\text{f.a.a.\,}\,t \in (0,T),\label{xiq0}
 \\ \bar q^i(T)-q_d^i \leq 0 \Longrightarrow  \xi^i(t)\in [ \bar q^i(T)-q_d^i,0] \quad\text{f.a.a.\,}\,t \in (0,T),\label{lq0}
  \\ \bar q^i(T)-q_d^i > 0 \Longrightarrow \dual{\lambda^i}{v}_{W_0^{1,1}(0,T)}\geq 0 \quad \forall\,v \in W_0^{1,1}(0,T), v\geq 0, \label{xiq}
 \\\bar q^i(T)-q_d^i < 0 \Longrightarrow \dual{\lambda^i}{v}_{W_0^{1,1}(0,T)}\leq 0 \quad \forall\,v \in W_0^{1,1}(0,T), v\geq 0,\label{lq}
 \end{gather}
  \end{subequations}for all $i=1,...,n.$ In particular,
  \begin{equation}\label{lxi}
  \xi^i(t)\dual{\lambda^i}{v}_{W_0^{1,1}(0,T)} \geq 0 \quad \text{f.a.a.\,}\,t \in (0,T),\ \forall\,v \in W_0^{1,1}(0,T), v\geq 0,
  \end{equation}for all $i=1,...,n$ for which $\bar q^i(T)\neq q_d^i$.  Moreover, 
 \[\bar q^i(T)-q_d^i = 0 \Longrightarrow  \xi^i(t)=0=\lambda^i(t) \quad \text{f.a.a.\,}t \in (0,T),\]
for all $i=1,...,n$.
    \end{proposition}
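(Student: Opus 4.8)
The plan is to derive the asserted sign and interval properties first at the viscous level---for the adjoint state $\xi_\e$ and multiplier $\lambda_\e$ from Lemma~\ref{thm:ss_qsep1}---and then to pass to the limit using the weak-$\star$ convergences \eqref{l_conv}--\eqref{xi_conv} already obtained in the proof of Theorem~\ref{thm:os}. This transfer is sound because every claimed statement is stable under weak-$\star$ limits: a pointwise two-sided bound $\xi_\e^i(t)\in[\underline a_\e^i,\overline a_\e^i]$ a.e.\ with $\underline a_\e^i\to\underline a^i$, $\overline a_\e^i\to\overline a^i$ passes to $\xi^i(t)\in[\underline a^i,\overline a^i]$ a.e., while a one-sided estimate ``$\int_0^T\lambda_\e^i v\,dt\ge0$ for all $v\ge0$'' (legitimate since $\lambda_\e\in L^\infty(0,T;\R^n)$ in the viscous case) passes to $\dual{\lambda^i}{v}_{W_0^{1,1}(0,T)}\ge0$. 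By \eqref{y_conv1}, $q_\e^i(T)-q_d^i$ has the sign of $\bar q^i(T)-q_d^i$ for $\e$ small, so it suffices to produce the corresponding bounds for $\xi_\e$ and $\lambda_\e$.

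The affinity of $\kappa$ and $j$ is exploited as follows. Since $\kappa$ is affine, $\kappa'\equiv a\ge0$ is constant and $\kappa''\equiv0$; hence $\F=(\kappa\circ\HH)$ is affine and $\F'(q_\e)=a\,\HH'$ is a \emph{fixed} linear operator, independent of $q_\e$ and of $\e$, acting componentwise, so that $[\F'(q_\e)]^\star$ is the same for all $\e$ and the $\kappa''$-contribution to $\tfrac{d}{dt}[\F'(q_\e)v]$ disappears. Consequently the viscous adjoint equation \eqref{eq:adjoint1_q1} \emph{decouples} into $n$ scalar equations
\begin{equation*}
-\dot\xi_\e^i(t)+\alpha\lambda_\e^i(t)+a\int_t^T\lambda_\e^i(s)\,ds=j'(q_\e)^i(t)\ \text{ a.e.\ in }(0,T),\qquad\xi_\e^i(T)=q_\e^i(T)-q_d^i,
\end{equation*}
with $j'(q_\e)^i$ a fixed $L^2(0,T)$ function by the affinity of $j$; moreover, by a comparison argument for the Volterra-type ODE \eqref{eq:syst_diff1} (whose right-hand side is nondecreasing in $\ell$ and, since $a\ge0$, nonincreasing in $q$) the map $S_\e$ is order preserving. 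Finally, \eqref{eq:signcond_q} gives $\lambda_\e^i=0$ on $\{z_\e^i<0\}$, $\lambda_\e^i=\tfrac1\e\xi_\e^i$ on $\{z_\e^i>0\}=\{\dot q_\e^i>0\}$, and $0\le\lambda_\e^i\le\tfrac1\e\xi_\e^i$ on $\{z_\e^i=0\}$; equivalently $\lambda_\e^i=\tfrac{\hat a_\e^i}{\e}\xi_\e^i$ a.e.\ with some measurable $\hat a_\e^i\in[0,1]$, so that $\lambda_\e^i\xi_\e^i\ge0$ a.e.\ and $\xi_\e^i\ge0$ on $\{z_\e^i=0\}$.

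With these facts, fix $i$ with $\bar q^i(T)-q_d^i>0$, so $\xi_\e^i(T)=q_\e^i(T)-q_d^i>0$ for $\e$ small. Inserting $\lambda_\e^i=\tfrac{\hat a_\e^i}{\e}\xi_\e^i$ into the scalar adjoint equation yields $\dot\xi_\e^i=\tfrac{\alpha\hat a_\e^i}{\e}\xi_\e^i+a\int_\cdot^T\lambda_\e^i-j'(q_\e)^i$: a linear scalar ODE whose leading, stabilising drift $\tfrac{\alpha\hat a_\e^i}{\e}\xi_\e^i$ has $\hat a_\e^i\ge0$ and whose history term $a\int_\cdot^T\lambda_\e^i$ is sign-concordant with $\xi_\e^i$ on any interval where $\xi_\e^i$ keeps a fixed sign (because $a\ge0$). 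A backward continuation/Gronwall argument from $t=T$ (using the equivalent integral form $\xi_\e^i(t)=\xi_\e^i(T)\exp(-\tfrac\alpha\e\int_t^T\hat a_\e^i)+(\text{lower-order terms})$) then shows that $\xi_\e^i$ cannot change sign on $[0,T]$, whence $0<\xi_\e^i(t)\le\xi_\e^i(T)$ there; by \eqref{eq:signcond_q} this forces $\lambda_\e^i\ge0$ a.e., hence $\int_0^T\lambda_\e^i v\,dt\ge0$ for $v\ge0$. Passing to the limit gives \eqref{xiq0} and \eqref{xiq}; the case $\bar q^i(T)-q_d^i<0$ is symmetric, giving \eqref{lq0} and \eqref{lq}, and \eqref{lxi} follows by multiplying the interval conditions with the sign conditions. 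For the last implication, observe that the estimate $\|\xi_\e^i\|_{C([0,T])}\le|q_\e^i(T)-q_d^i|$ holds for \emph{every} $i$ (the argument used only the sign of $\xi_\e^i(T)$); thus if $\bar q^i(T)=q_d^i$ then $\xi_\e^i\to0$ in $C([0,T])$, so $\xi^i\equiv0$, and feeding $\xi^i\equiv0$ and $\bar q^i(T)-q_d^i=0$ into the limit adjoint equation \eqref{eq:adjoint1} (tested with $v=e_i\psi$) leaves a relation of the form $\alpha\dual{\lambda^i}{\psi}_{W_0^{1,1}(0,T)}+\dual{\lambda^i}{(\F'(\bar q)e_i\psi)^i}_{W_0^{1,1}(0,T)}=\int_0^T j'(\bar q)^i\psi$, from which $\lambda^i\equiv0$.

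The step I expect to be the main obstacle is the sign propagation for $\xi_\e^i$ on the \emph{active set} $\{\dot q_\e^i>0\}$: there $\lambda_\e^i=\tfrac1\e\xi_\e^i$ and the sign of $\xi_\e^i$ is not known a priori, so one must run the backward comparison above, which is complicated precisely by the \emph{nonlocal} history term $a\int_\cdot^T\lambda_\e^i$ inherited from $\HH$ (this is where $a\ge0$, i.e.\ the monotonicity of $\kappa$, is essential) and by the source $j'(q_\e)$ (this is where the affinity of $j$, keeping that term a fixed, controllable function, enters). A secondary difficulty is that the constant $1/\e$ in $|\lambda_\e^i|\le\tfrac1\e|\xi_\e^i|$ is not uniform: ``$\xi_\e^i\to0$'' does not by itself give ``$\lambda_\e^i\to0$'', which is why the degenerate terminal case $\bar q^i(T)=q_d^i$ must be closed through the limit equation and the uniform bounds of Proposition~\ref{bound} (together with $z_\e\to\bar z$ in $H^1(0,T;\R^n)$ and the identity $\dot q_\e^i\xi_\e^i=z_\e^i\lambda_\e^i$) rather than through the pointwise viscous relations.
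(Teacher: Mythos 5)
Your overall strategy (establish the bounds at the viscous level for the pair $(\xi_\e,\lambda_\e)$ from Lemma~\ref{thm:ss_qsep1}, then pass to the weak-$\star$ limit from the proof of Theorem~\ref{thm:os}) is the same as the paper's, but the step you yourself flag as the main obstacle is precisely where the proposal has a genuine gap, and in the generality you set up it cannot be closed. You keep a possibly nonzero constant $\kappa'\equiv a$ and a nonzero fixed source $j'$, and assert via an unexecuted ``backward Gronwall/comparison'' that $\xi_\e^i$ keeps the sign of $\xi_\e^i(T)$ with $0<\xi_\e^i(t)\le\xi_\e^i(T)$. This fails as soon as $j'\not\equiv0$: on an inactive interval ($z_\e^i<0$, hence $\lambda_\e^i=0$, $\hat a_\e^i=0$) your scalar adjoint equation reduces to $\dot\xi_\e^i=a\int_\cdot^T\lambda_\e^i\,ds-j'(q_\e)^i$, and integrating backward from $T$ a source of the ``wrong'' sign pushes $\xi_\e^i$ above $\xi_\e^i(T)$ or below $0$; consequently neither the interval bound nor the sign of $\lambda_\e^i$ follows, and the same defect undermines your claimed uniform estimate $\|\xi_\e^i\|_{C([0,T])}\le|q_\e^i(T)-q_d^i|$ and the final ``from which $\lambda^i\equiv0$'' in the degenerate case. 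What you do not use (or notice) is that in this setting affinity of $\kappa$ actually forces $\kappa'\equiv0$ (an affine $\kappa:\R\to[0,\infty)$ is constant; alternatively $\kappa'\in W^{1,2}(\R)$ from Assumption~\ref{assu} excludes a nonzero constant), and the paper's proof moreover works with $\kappa'=j'=0$, so that the terms $[\F'(q_\e)]^\star\lambda_\e$ and $j'(q_\e)$ are absent from \eqref{eq:adjoint1_q1}; this reduction is exactly what makes any such sign argument go through.

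The paper's mechanism is also different and avoids pointwise ODE comparison altogether: it tests the $i$-th component of \eqref{eq:adjoint1_q1} with $\max\{\xi_\e^i,0\}$ and with $\min\{\xi_\e^i,0\}$, uses \eqref{eq:signcond_q} to see $\lambda_\e^i\max\{\xi_\e^i,0\}\ge0$ and $\lambda_\e^i\min\{\xi_\e^i,0\}\le0$ a.e., obtains $\max\{\xi_\e^i(t),0\}^2\le\max\{\xi_\e^i(T),0\}^2$ and $\min\{\xi_\e^i(t),0\}^2\le\min\{\xi_\e^i(T),0\}^2$ for all $t$, and passes these bounds to the limit by weak-$\star$ lower semicontinuity; the multiplier signs come from combining the resulting inequalities into $0\le[q_\e^i(T)-q_d^i]\int_0^T\lambda_\e^i v\,dt$ for $v\ge0$ before letting $\e\searrow0$, which also handles the terminal values without needing their sign to stabilize for small $\e$. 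If you restrict to $\kappa'=j'=0$ as the paper does, your route can indeed be salvaged in essentially explicit form (then $\lambda_\e^i=\tfrac{\hat a_\e^i}{\e}\xi_\e^i$ gives $\xi_\e^i(t)=\xi_\e^i(T)\exp\bigl(-\tfrac{\alpha}{\e}\int_t^T\hat a_\e^i\,ds\bigr)$, from which all viscous bounds follow), but as written the proposal leaves its central step unproved and claims it in a generality in which it is false.
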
 

 \begin{proof}
%
Let $\{\ell_\e\}_{\e>0}$ be the sequence of  local minimizers of $\{ \eqref{eq:min_e}\}_{\e>0}$ from Lemma \ref{approx} and let $(q_\e,\xi_\e, \lambda_\e)$ be the tuple from Lemma \ref{thm:ss_qsep1}. From Theorem \ref{thm:os} we already know that there exists $\xi$ and $\lambda$ such that \eqref{eq:opt_syst} is fulfilled. 
In the following, $i \in \{1,...,n\}$ is arbitrary but fixed and we keep in mind that $\kappa=0$ and $j=0$. 
 Testing the $i$-th component of \eqref{eq:adjoint1_q1} with  $\max\{\xi^i_\e,0\}$ leads to
\[
 \int_t^T -\dot \xi^i_\e \max\{\xi^i_\e,0\} \,ds  +\alpha \int_t^T   \lambda^i_\e \max\{\xi^i_\e,0\} \,ds =0 \quad \forall\,t \in [0,T].
  \]In light of \cite[Lem.\,3.3]{d_w}, this implies 
  $$\frac{1}{2}[\max\{\xi^i_\e(t),0\}^2-\max\{\xi^i_\e(T),0\}^2]=-\alpha  \int_t^T  {\lambda^i_\e \max\{\xi^i_\e,0\}}\,ds \leq 0 \quad \forall\,t \in [0,T], $$
 where we relied on \eqref{eq:signcond_q}. Thus,
 \begin{equation}\label{1b}
 \begin{aligned}
 \max\{\xi^i_\e(t),0\}^2 \leq \max\{\xi^i_\e(T),0\}^2=\max\{ q_\e^i(T)-q_d^i,0\}^2 \quad \forall\,t \in [0,T].
 \end{aligned}\end{equation}
 Using e.g.\,{\cite[Thm.\,3.23]{dac08}} and the convergences \eqref{xi_conv} and \eqref{y_conv1} then yields
  \begin{equation}\label{dac}
 \begin{aligned}
\int_0^T \max\{\xi^i(t),0\}^2 v(t) \,dt &\leq \liminf_{\e \to 0} \int_0^T\max\{\xi^i_\e(t),0\}^2 v(t) \,dt 
\\&\leq \lim_{\e \to 0}  \max\{ q_\e^i(T)-q_d^i,0\}^2  \int_0^T  v(t)\,dt
\\&=\max\{\bar q^i(T)-q_d^i,0\}^2  \int_0^T  v(t)\,dt \quad \forall\,v \in C_c^\infty[0,T], v\geq 0, 
 \end{aligned}\end{equation}whence
 \begin{equation}\label{11b}
 \begin{aligned}
 \max\{\xi^i(t),0\}  \leq \max\{\bar q^i(T)-q_d^i,0\} \quad \text{f.a.a.\,}t \in (0,T).
 \end{aligned}\end{equation}  Hence,
 \begin{equation}\label{imp1}
 \begin{aligned}
\bar q^i(T)-q_d^i \leq 0 \Longrightarrow  \xi^i(t)\leq 0   \quad \text{f.a.a.\,}t \in (0,T).
 \end{aligned}\end{equation}

Further, testing the $i-$th component of  \eqref{eq:adjoint1_q1} with $\min\{\xi^i_\e,0\}$ leads to
\[
 \int_t^T -\dot \xi^i_\e \min\{\xi^i_\e,0\} \,ds  +\alpha \int_t^T  \lambda^i_\e \min\{\xi^i_\e,0\} \,ds =0 \quad \forall\,t \in [0,T].
  \]
  This means that
  $$\frac{1}{2}[\min\{\xi^i_\e(t),0\}^2-\min\{\xi^i_\e(T),0\}^2]=-\alpha  \int_t^T  {\lambda^i_\e \min\{\xi^i_\e,0\}}\,ds  \leq 0 \quad \forall\,t \in [0,T],
$$cf.\,\eqref{eq:signcond_q}. From here  we follow
 \begin{equation}\label{1c}
 \begin{aligned}
 \min\{\xi^i_\e(t),0\}^2 \leq \min\{\xi^i_\e(T),0\}^2 \quad \forall\,t \in [0,T].
 \end{aligned}\end{equation}
 By arguing as in the proof of \eqref{dac} we get  $ \min\{\xi^i(t),0\}^2\leq  \min\{\bar q^i(T)-q_d^i,0\}^2$ a.e.\,in $(0,T),$ whence
 \begin{equation}\label{11c}
 \begin{aligned}
 \min\{\xi^i(t),0\}  \geq \min\{\bar q^i(T)-q_d^i,0\} \quad \text{f.a.a.\,}\,t \in (0,T).
 \end{aligned}\end{equation}
 As a consequence, it holds 
  \begin{equation}\label{imp2}
 \begin{aligned}
\bar q^i(T)-q_d^i \geq 0 \Longrightarrow  \xi^i(t)\geq 0   \quad \text{f.a.a.\,}t \in (0,T).
 \end{aligned}\end{equation}  
 Now, from \eqref{imp1} and \eqref{imp2} in combination with \eqref{11b} and \eqref{11c} we can conclude \eqref{xiq0} and \eqref{lq0}.  

To prove \eqref{xiq} and \eqref{lq}, we turn our attention to  \eqref{eq:signcond_q}, from which we first deduce 
$$0 \leq \lambda^i_\e (t)\max\{ q_\e^i(T)-q_d^i,0\} \quad \text{a.e.\,where }z^i_\e \leq 0.$$
Moreover, \eqref{1c} implies that 
$$
\bar q_\e^i(T)-q_d^i \geq 0 \Longrightarrow  \xi_e^i(t)\geq 0   \quad \forall\,t \in [0,T],
$$so that, by making use again of \eqref{eq:signcond_q}, we get
$$0 \leq \lambda^i_\e (t)\max\{ q_\e^i(T)-q_d^i,0\} \quad \text{a.e.\,where }z^i_\e > 0.$$
Hence,
  \begin{equation}\label{dac1}
 \begin{aligned}
0 &\leq \max\{ q_\e^i(T)-q_d^i,0\} \int_0^T \lambda^i_\e (t) v(t) \,dt 
\quad \forall\,v \in W_0^{1,1}(0,T), v\geq 0.
 \end{aligned}\end{equation}
In order to show \eqref{dac2} below, we argue in a similar way. From \eqref{eq:signcond_q} we have
$$ \lambda^i_\e (t)=0  \quad \text{a.e.\,where }z^i_\e <0.$$ Further, \eqref{1b} leads to the implication
$$
\bar q_\e^i(T)-q_d^i \leq 0 \Longrightarrow  \xi_e^i(t)\leq 0   \quad \forall\,t \in [0,T],
$$so that, by  \eqref{eq:signcond_q}, we infer
$$\bar q_\e^i(T)-q_d^i \leq 0 \Longrightarrow \lambda^i_\e (t) \leq 0 \quad \text{a.e.\,where }z^i_\e \geq 0,$$
whence
  \begin{equation}\label{dac2}
 \begin{aligned}
0 \leq \min\{ q_\e^i(T)-q_d^i,0\} \int_0^T \lambda^i_\e (t) v(t) \,dt 
\quad \forall\,v \in W_0^{1,1}(0,T), v\geq 0.
 \end{aligned}\end{equation}follows.
From \eqref{dac1} and \eqref{dac2} we  conclude
\begin{equation}
 \begin{aligned}
0 &\leq [ q_\e^i(T)-q_d^i] \int_0^T \lambda^i_\e (t) v(t) \,dt 
\\& \quad \overset{\e \to 0}{\longrightarrow} [\bar q^i(T)-q_d^i] \dual{\lambda^i}{v}_{W_0^{1,1}(0,T)} \quad \forall\,v \in W_0^{1,1}(0,T), v\geq 0,
 \end{aligned}\end{equation}
based on the convergences \eqref{l_conv} and \eqref{y_conv1}. This proves \eqref{xiq} and \eqref{lq}. Finally, the last assertion is due to \eqref{xiq0} or  \eqref{lq0}, which imply $\xi^i=0$, in combination with \eqref{eq:adjoint1} and $\kappa'=j'=0,$ which lead to $\lambda^i=0.$
The proof is now complete.
\end{proof}
\begin{remark}\label{rem:ft}
It would be desirable to obtain the result in Proposition \ref{prop:aoc} in the absence of the requirement that $\kappa$ is an affine function. The reason why we failed to do so is the presence of the integral operator $\HH$. If this would be replaced by an identity operator, meaning that the state dependence of the dissipation potential $\RR$ happens through a Nemytskii operator, then the result in Proposition \ref{prop:aoc} would stay true under  the condition $L_k<\alpha$. In this case, $\kappa$ does not need to be affine. Let us underline that the aforementioned smallness assumption is needed anyway  to prove existence of solutions for systems of the type
\[-\partial_q \EE(t,q(t)) \in \partial_{\dot q} \RR (q(t),\dot{q}(t)) \quad \ae (0,T).\]
This fact has been established in \cite{mie_ros}, see also \cite[Rem.\,3.3]{ris}.
\end{remark}

\section{Comparison with the complete optimality conditions}\label{4}
In this section, we want to  get an idea about the completeness of the optimality conditions established in Theorem \ref{thm:os} and Proposition \ref{prop:aoc}. To this end,  we resort to a formal Lagrange approach. For simplicity, we assume in all what follows  that $n=1$. We recall that, according to Lemma \ref{prop:ris}, \eqref{eq:n} can be rewritten as  
  \begin{equation}  
(\dot q (t),-\alpha q(t) +\ell(t)-\F(q)(t)) \in \graph \partial \II_\CC \quad  \ae (0,T).\end{equation}
Note that $$\graph \partial \II_\CC=(\{0\} \times (-\infty,0]) \cup ((0,\infty) \times \{0\})=:\MM,$$
since $\II_\CC(y)=0,$ if $y \in \CC$ and $\II_\CC(y)=\infty,$ otherwise. 
We define the Lagrangian as 
$$\LL(q,\ell,\xi,\lambda):=J(q,\ell)-(\xi,\dot q)_{L^2(0,T)}+(\lambda,-\alpha q +\ell-\F(q))_{L^2(0,T)},$$
where we abbreviate
$J(q,\ell):=j(q)+\frac{1}{2}|q(T)-q_d|^2+\frac{1}{2}\|\ell\|^2_{H^1(0,T)}.$
Then, a formal derivation of optimality conditions yields 
\begin{subequations}
\begin{gather}
\partial_q \LL(\bar q,\ll,\xi,\lambda)=0,\label{l1}
\\\partial_\ell \LL(\bar q,\ll,\xi,\lambda)=0,\label{l2}
\\(-\xi(t),\lambda(t)) \in \TT_{\MM}(\dot {\bar q}(t), \bar z(t))^\circ \quad \text{a.e.\,in }(0,T),\label{nc}
\end{gather}
\end{subequations}where  $\TT_{\MM}(\dot {\bar q}(t), \bar z(t))^\circ$ denotes the polar of the tangent cone  and $\bar z:=-\alpha \bar q +\ll-\F(\bar q)$. 
The identities  \eqref{l1}-\eqref{l2} are equivalent to 
\begin{equation}
\begin{aligned}
j'(\bar q)+\bar q(T)-q_d+\dot \xi -\alpha \lambda -[ \F'( \bar q)]^\star \lambda&=0,
\\-\ddot \ll+\ll+\lambda&=0,
\end{aligned}
\end{equation}and thus, 
they  correspond to \eqref{eq:adjoint1} and \eqref{eq:grad_q}.
Now, let us take a closer look at \eqref{nc}. {This can be rewritten as}
\begin{subequations}
\begin{gather}
\dot {\bar q}(t)>0, \bar  z(t)=0 \Rightarrow \xi(t)= 0, \label{nc1}
\\\dot{\bar q}(t)=0, \bar  z(t)<0 \Rightarrow \lambda(t)= 0, \label{nc2}
\\\dot {\bar q}(t)=0, \bar  z(t)=0 \Rightarrow \xi(t)\geq 0, \lambda(t) \geq 0\label{nc3}
\end{gather}
\end{subequations}a.e.\,in $(0,T)$.
The relation \eqref{nc1} is implied by  \eqref{xi}, while \eqref{nc2} corresponds to \eqref{lambda}. To see the latter, one assumes that $\lambda$ belongs to a Lebesgue space, in which case fundamental lemma of calculus of variations gives in turn $\lambda \bar z=0$, whence \eqref{nc2}. The situation in \eqref{nc3} is a little more delicate. In Proposition \ref{prop:aoc} we managed to show that $\xi$ and $\lambda$  have the same sign and that this depends on the value of the state at the end of the process. Thus, in the case $\bar q (T) \geq q_d$, the relation  \eqref{nc3} is true, cf. Proposition \ref{prop:aoc}. 
However, it is not clear if $\bar q (T) < q_d$ implies that $\xi$ and $\lambda$ vanish.


\begin{remark}\label{rem:st}
If we look at the terminology concerning stationarity concepts for MPECs {\cite{ScheelScholtes2000}}, then the system \eqref{nc1}-\eqref{nc3} is of strong stationary type. 
As mentioned above, our optimality conditions from Theorem \ref{thm:os} and Proposition \ref{prop:aoc} fall into this class, provided that the optimal state at the end of the process is sufficiently large (i.e., $\bar q (T) \geq q_d$).

A slightly weaker stationarity concept involves  Clarke (C-)stationary conditions. According to {\cite{ScheelScholtes2000}}, these conditions say that $\xi$ and $\lambda$  have the same sign, which is precisely the case in Proposition \ref{prop:aoc}, see \eqref{lxi}.

Let us mention that there exist other various stationarity concepts that lie between C- and strong stationarity with regard to their strength, such as 
Mordukhovich (M-)stationarity. The latter feature a condition of the type
\[\lambda \geq 0,\  \xi \geq 0 \quad \text{ or }\quad  \lambda \xi=0 \quad \text{a.e.\,in }(0,T).\] In the context of our problem this would involve proving 
\begin{equation}\label{m_stat}
\bar q(T) < q_d \Rightarrow \lambda \xi=0 \quad \text{a.e.\ in }(0,T).
\end{equation}
In view of the lack of regularity of $\lambda$ and $\xi$, see Theorem \ref{thm:os}, it is however not clear how to formulate this in variational terms. 
\end{remark}

\section*{Acknowledgment}This work was supported by the DFG grant BE 7178/3-1 for the project "Optimal Control of Viscous
Fatigue Damage Models for Brittle Materials: Optimality Systems".

\bibliographystyle{plain}
\bibliography{hist_dep_EVI}

\end{document}